\newtheorem{theorem}{Theorem}[section]
\newtheorem{proposition}[theorem]{Proposition}
\theoremstyle{remark}
\newtheorem{remark}[theorem]{Remark}
\newtheorem*{claim*}{Claim}
\newcommand{\g}[1]{\ensuremath{\mathfrak{#1}}}
\newcommand{\FM}{\ensuremath{\mathcal{F}}}
\newcommand{\bR}{\ensuremath{\mathbb{R}}}
\newcommand{\bC}{\ensuremath{\mathbb{C}}}
\newcommand{\bV}{\ensuremath{\mathbb{V}}}
\newcommand{\bW}{\ensuremath{\mathbb{W}}}
\newcommand{\ads}{\ensuremath{AdS^{n+1}}}
\newcommand{\adss}{\ensuremath{AdS^{2n+1}}}
\newcommand{\chn}{\ensuremath{\bC H^n}}
\newcommand{\chk}{\ensuremath{\bC H^k}}
\newcommand{\trsp}{\ensuremath{^{\intercal}}}
\newcommand{\ii}{\ensuremath{\mathrm{i}}}
\DeclareMathOperator{\tr}{tr}
\DeclareMathOperator{\re}{Re}
\DeclareMathOperator{\ad}{ad}
\DeclareMathOperator{\Exp}{Exp}
\DeclareMathOperator{\spann}{span}
\DeclareMathOperator{\diag}{diag}
\DeclareMathOperator{\B}{B}
\begin{document}
\title[Cohomogeneity one anti de Sitter spacetime]{Cohomogeneity one actions\\ on anti de Sitter spacetimes}

\author[J.\ C.\ D\'{\i}az-Ramos]{J.C. D\'{\i}az-Ramos}
\author[S.M.B.\ Kashani]{S.M.B. Kashani}
\author[M.J.\ Vanaei]{M.J. Vanaei}

\address{Department of Mathematics, Universidade de Santiago de Compostela, Spain}
\address{Dept.\ of Pure Math., School of Math.\ sciences, Tarbiat Modares University, Tehran, Iran, P.O.~Box 14115-134}
\address{Dept.\ of Pure Math., School of Math.\ sciences, Tarbiat Modares University, Tehran, Iran, P.O.~Box 14115-134}

\email{josecarlos.diaz@usc.es}
\email{kashanim@modares.ac.ir}
\email{javad.vanaei@modares.ac.ir}

\thanks{The second and the third authors have been supported by the Iranian presidential office via grant no.\ 88001210. The first author has been supported by projects EM2014/009, GRC2013-045 and MTM2013-41335-P with FEDER funds (Spain)}

\begin{abstract}
In this paper we classify, up to orbit equivalence, cohomogeneity one actions of connected closed Lie subgroups of $U(1,n)$ on the $(2n+1)$-dimensional anti de Sitter spacetime $\adss$. We also give some new examples of nonproper cohomogeneity one actions on $\ads$ and determine parabolic Lie subgroups of $SO(2,n)$ and their orbits in $\ads$.
\end{abstract}

\date{\today}

\subjclass[2010]{53C30, 53C50.}

\keywords{Cohomogeneity one actions, Anti de Sitter spacetime, Parabolic subgroups}

\maketitle


\section{Introduction}

Cohomogeneity one actions have successfully been used in Riemannian geometry to construct examples of manifolds with certain geometric properties. See for example~\cite{GrZi00} or~\cite{Wi06} for a remarkable relation between cohomogeneity one actions and Riemannian manifolds of positive curvature. Other uses of cohomogeneity one actions can be found to build examples of Einstein metrics or Ricci solitons~\cite{DaWa11}. They have also been used to construct examples of submanifolds with symmetries in~\cite{GoGu00}. In general, many geometric conditions translate into a difficult PDE on a manifold; if the manifold is of cohomogeneity one and the PDE behaves correctly with respect to this structure, then this PDE becomes an ODE which one might be capable of solving. A recent example of this procedure can be found in~\cite{DiDoVi}, where the mere fact of having existence and uniqueness results for ordinary differential equations allows, at least theoretically, to solve a problem.

Our motivation for studying cohomogeneity one actions comes from a different perspective in this paper. We are interested in the classification and study of cohomogeneity one actions on a given manifold. Cohomogeneity one actions on Euclidean spaces were classified by Segre~\cite{Se38} in his study of isoparametric hypersurfaces. Kollross gave a classification of cohomogeneity one actions on compact irreducible symmetric spaces in~\cite{Ko02}. For irreducible symmetric spaces of noncompact type some important progress has been done by Berndt and Tamaru~\cite{BeTa13}. The classification of cohomogeneity one actions on complex hyperbolic spaces, which was also achieved by Berndt and Tamaru in~\cite{BeTa07}, is particularly relevant for this paper. It can also be obtained as a corollary to the more general classification of isoparametric hypersurfaces in complex hyperbolic spaces~\cite{DiDoSa}.

The study of isometric actions on Lorentzian manifolds is not as well developed as in Riemannian manifolds, although some interesting results are obtained by Adams and Stuck in~\cite{AdSt97}. One important question that arises in this setting is whether nonproper actions should be investigated. On the one hand, the fact that a Lie group action is not proper makes the study much more complicated. For example, in~\cite{BeDiVa} we have shown that there might exist cohomogeneity zero actions on a Lorentzian manifold of constant curvature, which are nevertheless not transitive. Other curious phenomena such as the existence of non-closed orbits may occur. On the other hand, there are many interesting non proper actions that seem worthwhile to study. One such example is the action of $O(1,n)$ on the Minkowski space $\mathbb{L}^{n+1}$, whose orbits are real hyperbolic spaces, de Sitter spacetimes and light cones, all of which have geometric or physical meaning.

In this paper we study cohomogeneity one actions on anti de Sitter spacetimes, as a continuation of a previous study in~\cite{VKS16}. In the first part of the paper we assume that our action is proper and study actions on odd dimensional anti de Sitter spacetimes that are related to cohomogeneity one actions on complex hyperbolic spaces via the Hopf map. We also give a description of their orbit spaces, and how the orbits of these actions can be obtained geometrically. This part is basically an application of the classification of cohomogeneity one actions on complex hyperbolic spaces~\cite{BeTa07}, and of transitive actions on complex hyperbolic spaces~\cite{CaGaSw09}.

In the second part of this paper we give new examples of cohomogeneity one and cohomogeneity zero actions on anti de Sitter spacetimes of any dimension. More specifically, we consider the Iwasawa decomposition of $SO^0(2,n)$ and parabolic subgroups of $SO^0(2,n)$. Most of the examples provided are not proper. Hence, there are cohomogeneity zero actions that are not transitive, and cohomogeneity one actions with infinitely many singular orbits. In these cases there are also orbits that are not closed. This contrasts with the Riemannian setting, where these natural decompositions of isometry groups give rise to well-behaved proper isometric actions.

The paper is organized as follows. In Section~\ref{sec:preliminaries} we present the notions and notations that are used in this paper. In Section~\ref{sec: C1-LieSub-SU(1,n)} we study subgroups of $U(1,n)$ acting with cohomogeneity one on odd dimensional anti de Sitter spacetimes. First we introduce some results and notation in Subsection~\ref{subsec:chn}, and then we address our study in Subsection~\ref{subsec:u1n}, to prove one of our main results: Theorem~\ref{Th.-C1-Liesub-SU(1,n)}. In Section~\ref{sec:parabolicsubgroups} we give new examples of cohomogeneity zero and cohomogeneity one actions on anti de Sitter spacetimes. We give in Subsection~\ref{Iwa-SO} the explicit root space decomposition of the Lie algebra $\g{so}(2,n)$. In subsections~\ref{sec:N} and~\ref{subsec: Parab-Subg-SO(2,n)} we take advantage of these results to determine the Iwasawa decomposition and the parabolic subalgebras of $\g{so}(2,n)$. The main contributions in this section are propositions~\ref{N-action},~\ref{th:parab-action} and other interesting examples of isometric actions together with the study of their orbits.


\section{Preliminaries}\label{sec:preliminaries}

We start with some basic definitions and results related to anti de Sitter spacetimes and cohomogeneity one actions.

\subsection{Anti de Sitter spacetimes}\label{sec:antiDeSitter}\hfill

Throughout the paper, $\mathbb{R}^{\nu,n-\nu}$, $0\leq \nu \leq n$, stands for the pseudo-Euclidean vector space $\mathbb{R}^n$ endowed with the standard scalar product of signature $(\nu,n- \nu)$. For $r>0$ we define the quadric hypersurfaces
\begin{align*}
S^n_{\nu}(r)&{}= \{x \in \mathbb{R}^{\nu, n-\nu+1} : \langle u, u\rangle =r^2\},&
H^n_{\nu}(r)&{}= \{x \in \mathbb{R}^{\nu+1,n-\nu} : \langle u, u\rangle =-r^2\},
\end{align*}
called, respectively, the pseudo-sphere and the pseudo-hyperbolic space of index $\nu$ and radius~$r$. The $n$-dimensional pseudo-sphere and pseudo-hyperbolic spaces of index $1$ and radius $1$ are known as the \emph{de Sitter} and the \emph{anti de Sitter} spacetimes; they are denoted by $dS^n$ and $AdS^n$, respectively.

The {semi-orthogonal group} is defined as $O(\nu, n- \nu)=\{A\in Gl(n,\bR): A\trsp \epsilon A=\epsilon\}$, where $\epsilon$ is the diagonal matrix $\diag (-I_\nu , I_{n-\nu})$.
The {special semi-orthogonal group} is
$SO(\nu, n-\nu)=\{A\in O(\nu, n-\nu) : \det A=1 \}$.
As pseudo-Riemannian symmetric spaces we can write
\begin{align*}
S^n_{\nu} &{}= O(\nu, n- \nu +1)/O(\nu, n- \nu) = SO(\nu, n- \nu +1)/SO(\nu, n- \nu) , \\
H^n_{\nu} &{}= O(\nu +1, n- \nu)/O(\nu, n- \nu) = SO(\nu +1, n- \nu)/SO(\nu, n- \nu).
\end{align*}

\subsection{Isometric actions}\label{subsec:isometric}\hfill

Let $M$ be a manifold. An action of a Lie group $G$ on $M$ is called proper if the map
\[
G \times M \to M \times M,  \quad (g,p)\mapsto (p,g\cdot p),
\]
is proper, that is, the preimage of a compact subset of $M\times M$ is compact in $G\times M$. If an action is proper, then the orbit space $M/G$ is a Hausdorff space with the quotient topology, the orbits are closed submanifolds of $M$, and all isotropy subgroups are compact.

We now consider a pseudo-Riemannian manifold $(M,g)$. We denote its isometry group by $I(M)$. It is well-known that $I(M)$ is a Lie group. All the actions that we consider in this paper are isometric (that is, they preserve the pseudo-Riemannian structure of $M$). An action is effective if the only element of the group acting as the identity is the identity itself. An effective isometric action is equivalent to the natural action of a subgroup of the isometry group. The action of a connected Lie subgroup $G$ of the isometry group $I(M)$ is called of cohomogeneity $r$ if the minimum codimension of its orbits is $r$. In this case, $M$ is said to be a cohomogeneity $r$ $G$-manifold.

Assume now that $M$ is a cohomogeneity one $G$-manifold, and that the action of $G$ on $M$ is proper. Then, by \cite{Be82}, the orbit space $M/G$ is a one dimensional manifold, possibly with boundary, homeomorphic to (i) $\bR$, (ii) $S^1$, (iii) $[0,1)$, or (iv) $[0,1]$ with their standard topology. We denote by $\pi\colon M\to M/G$ the projection. A point $p \in M$ is called regular (resp.\ {singular}) if $\pi(p)$ is an interior (resp.\ boundary) point. The corresponding orbit $G\cdot p$ is called {principal} or {regular} (resp.\ {singular}). A singular orbit whose codimension coincides with the cohomogeneity is usually called exceptional.
In cases (i) and (iii), the manifold $M$ is $G$-equivariantly diffeomorphic to $G/K \times \bR$ and the twisted product $G\times_{H}V$, respectively, where $K$ and $H$ are the isotropy subgroups of a regular and a singular point of $M$, respectively. The manifold $V$ is $H$-diffeomorphic to a $(d+1)$-dimensional Euclidean vector space upon which $H$ acts linearly and transitively on the unit sphere $S^d \subset V$, and $S^d \cong H/K$. In case (iii) there is exactly one singular orbit, every other orbit is principal, and a tube around the singular orbit. In case (ii) the projection $\pi\colon M \to S^1$ is a fibration with fiber $G/K$. In cases (i) and (ii) all orbits are principal and form a regular cohomogeneity one foliation. Finally, in case (iv), $M$ is obtained by gluing two manifolds of type (iii) together. In this case there are exactly two singular orbits and every other orbit is principal and a tube around any of the singular orbits (cf.~e.g.~\cite{AA92}).

In the Riemannian setting, an effective isometric action of a Lie group $G$ is proper if and only if it corresponds to the standard action of a \emph{closed} subgroup of the isometry group. Thus, cohomogeneity one manifolds are a natural generalization of homogeneous manifolds as cohomogeneity zero $G$-manifolds. In the non-Riemannian setting, a closed subgroup of the isometry group does not necessarily act properly. In this case, there are examples of cohomogeneity zero $G$-manifolds that are not homogeneous, that is, the group $G$ does not act transitively on $M$ (see Proposition~\ref{th:parab-action}, for example).


\section{Cohomogeneity one actions on $\adss$}
\label{sec: C1-LieSub-SU(1,n)}

The aim of this section is to present some examples of cohomogeneity one actions on odd dimensional anti De Sitter spacetimes that are obtained by lifting cohomogeneity one actions of complex hyperbolic spaces via the Hopf fibration.

We first recall some properties of the Lie algebra of the isometry group of $\chn$.

\subsection{Complex hyperbolic spaces}\label{subsec:chn}\hfill

We consider the complex vector space $\bC^n$ endowed with its standard complex structure $J$. Obviously, $\bC^n$ has an underlying structure of a real vector space that is isomorphic to $\bR^{2n}$. Thus, a \textit{real subspace} of $\bC^n$ is an $\bR$-linear subspace of the real vector space obtained from $\bC^n$ by restricting the scalars to the real numbers. Let $\bV$ be a real subspace of $\bC^n$ . The \emph{K\"{a}hler angle} of a nonzero vector $v \in \bV$ with respect to $\bV$ is defined to be the angle between $Jv$ and $\bV$ or, equivalently, the value $\varphi(v) \in [0,\pi/2]$ such that $\langle \pi_\bV(Jv),\pi_\bV(Jv)\rangle=\cos^2(\varphi(v)) \langle v,v\rangle$, where $\pi_\bV$ denotes the orthogonal projection map onto $\bV$. We say that $\bV$ has constant K\"{a}hler angle $\varphi$ if the K\"{a}hler angle of every nonzero vector $v\in \bV$ with respect to $\bV$ is $\varphi$. In particular, $\bV$ is a complex subspace if and only if it has constant K\"{a}hler angle $0$, and it is a totally real subspace if and only if it has constant K\"{a}hler angle $\pi/2$.

We now consider the complex vector space $\bC^{n+1}$ and denote by $\{e_0,e_1,\dots,e_n\}$ its canonical basis. We denote by $\bC^{1,n}$ the complex vector space $\bC^{n+1}$ endowed with the scalar product
\begin{equation} \label{Re-Lorentz-Pr}
\langle z,w \rangle=\re \left( -z_0\bar{w}_0 + \sum\limits_{k=1}^{n} z_k \bar{w}_k \right), \end{equation}
where $z$, $w\in\bC^{n+1}$. This scalar product makes $\bC^{1,n}$ isometric, as a real vector space, to $\bR^{2,2n}$. Moreover, the odd dimensional anti de Sitter spacetime can now be rewritten as
\[
\adss = \{ z \in \bC^{1,n} : \langle z,z \rangle=-1 \}.
\]

Consider the equivalence relation on $\adss$ generated by $z \sim \lambda z$ with $\lambda \in S^1 \subset \bC$. By definition, the complex hyperbolic space is the quotient manifold $\chn = \adss /\sim$ endowed with the Riemannian metric that makes the projection map $\pi \colon \adss \to \chn$ a pseudo-Riemannian submersion. The complex structure of $\bC^{1,n}$ induces a complex structure that makes $\chn$ a K\"{a}hler manifold of constant negative holomorphic sectional curvature. In particular, $\pi \colon \adss \to \chn$ is a principal fiber bundle over $\chn$ with total space $\adss$, fiber $S^1$ and whose projection map is called the {Hopf map} of $\chn$. If $n=1$, then $\bC H^1$ is isometric to the real hyperbolic space $\bR H^2$; hence, we consider $n\geq 2$ throughout this section.

The expression inside the brackets in~\eqref{Re-Lorentz-Pr} defines a pseudo-hermitian product in $\bC^{n+1}$. The group of transformations that preserves it, is denoted by $U(1,n)$. More explicitly, $U(1,n)=\{A\in GL(n,\bC):A^*\epsilon A=\epsilon\}$, where in this case $\epsilon=\diag(-1,I_n)$, and $(\cdot)^*$ denotes conjugate transpose. We also denote $SU(1,n)=\{A\in U(1,n):\det A=1\}$. It turns out that both $U(1,n)$ and $SU(1,n)$ act transitively but not effectively on $\chn$, the former with kernel $S^1=\{\lambda I_n\in U(1,n):\lambda\in S^1\}$, and the latter with finite kernel. Moreover, the corresponding Lie algebras $\g{u}(1,n)$ and $\g{su}(1,n)$ are reductive and simple, respectively. As a symmetric space, the complex hyperbolic space can be written as $\chn=G/K$ with $G=SU(1,n)$ and $K=S(U(1)U(n))\cong U(n)$.

We need a finer description of $\g{g}=\g{su}(1,n)$ for our results. We use the following notation to denote certain type of matrices that appear in this section:
\[
\lceil t, v, X \rceil =
\left(\begin{array}{@{}c|c@{}}
\ii t & 	v^* \\ \hline
v & X		
\end{array}\right),
\]
where $t \in \bR$, $v \in \bC^n$, and $X$ is a complex $(n \times n)$ matrix. Then, the Lie algebra of the group $SU(1,n)$ can be written as
\[
\g{g}=\g{su}(1,n) = \{\lceil t, v, X \rceil : t \in \bR, v \in \bC^n, X \in \g{u}(n), \ \ii t + \tr X = 0 \} .
\]
Let $\g{k}$ be the Lie subalgebra $\g{s}(\g{u}(1) \oplus \g{u}(n))$ of the Lie algebra $\g{g}$, and $\g{p}$ the orthogonal complement of $\g{k}$ in $\g{g}$ with respect to the Killing form $\B$ of $\g{g}$. Then, $\g{g}=\g{k} \oplus \g{p}$ is the Cartan decomposition of $\g{g}$ corresponding to the Cartan involution $X \mapsto -X^*$. We take the maximal abelian subspace $\g{a}=\bR \lceil 0, e_1, 0 \rceil$ of $\g{p}$ and denote its dual vector space by $\g{a}^*$. For each $\lambda \in \g{a}^*$ we define the subspace $\g{g}_\lambda \subset \g{g}$ by
\[
\g{g}_\lambda = \{ X \in \g{g} : \ad(H)(X)=\lambda(H)X \ \text{for all} \ H \in \g{a} \} .
\]
A nonzero covector $\lambda \in \g{a}^*$ is called a restricted root if the corresponding subspace $\g{g}_\lambda$ is nontrivial. The restricted root space decomposition of $\g{g}$ with respect to $\g{a}$ has the form $\g{g}=\g{g}_{-2\alpha} \oplus \g{g}_{-\alpha} \oplus \g{g}_0 \oplus \g{g}_\alpha \oplus \g{g}_{2\alpha}$ for a certain $\alpha \in \g{a}^*$.

The root spaces $\g{g}_\alpha$ and $\g{g}_{2\alpha}$ are isomorphic to the $(n-1)$-dimensional complex vector space $\bC^{n-1}$ and the $1$-dimensional euclidean vector space $\bR$, respectively. Explicitly,
\begin{align*}
\g{g}_\alpha &{}=
\left\{\lceil 0 , (0,v), \left(\begin{array}{@{}c|c@{}} 0 & v^* \\ \hline -v & 0 \end{array} \right) \rceil : v \in \bC^{n-1} \right\}, &
\g{g}_{2\alpha} &{}=
\left\{\lceil \mu , (\ii \mu , 0), \left(\begin{array}{@{}c|c@{}} -\ii \mu & 0 \\ \hline 0 & 0 \end{array}\right) \rceil : \mu \in \bR \right\}.
\end{align*}
We fix a criterion of positivity on the set of roots by letting $\alpha$ be a positive root. Then the subspace $\g{n}=\g{g}_\alpha \oplus \g{g}_{2\alpha}$, as the sum of the root spaces corresponding to all positive roots, turns out to be a nilpotent Lie subalgebra of $\g{g}$ with center $\g{g}_{2\alpha}$. The decomposition $\g{g}=\g{k} \oplus \g{a} \oplus \g{n}$ is called the Iwasawa decomposition of $\g{g}$.
The corresponding Iwasawa decomposition at Lie group level is $SU(1,n)=KAN$, where $K$, $A$ and $N$, denote the connected Lie subgroups of $SU(1,n)$ whose Lie algebras are $\g{k}$, $\g{a}$ and $\g{n}$, respectively.

Finally, $\g{g}_0=\g{k}_0\oplus\g{a}$, where $\g{k}_0=\g{g}_0\cap\g{k}\cong\g{s}(\g{u}(1)\g{u}(n-1))$ is a Lie subalgebra of $\g{k}$. Explicitly,
\[
\g{k}_0=\left\{\lceil \mu , 0, \left(\begin{array}{@{}c|c@{}} \ii \mu & 0 \\ \hline 0 & Y \end{array}\right)\rceil:\mu\in\bR,\ Y\in\g{u}(n-1),\ 2\ii\mu+\tr Y=0\right\}.
\]
Moreover, $[\g{k}_0,\g{g}_\lambda]\subset\g{g}_\lambda$ for any $\lambda\in\g{a}^*$. If $K_0$ is the connected subgroup of $SU(1,n)$ whose Lie algebra is $\g{k}_0$, then $K_0 A$ is a semi-direct product of Lie groups isomorphic to $S(U(1)U(n-1))\bR$.

\subsection{Lie subgroups of $U(1,n)$ acting with cohomogeneity one on $\adss$}\label{subsec:u1n}\hfill

The idea of this subsection is to lift cohomogeneity one actions on complex hyperbolic spaces to odd dimensional anti de Sitter spacetimes via the Hopf fibration $\pi\colon\adss \to \chn$. We classify, up to orbit equivalence, cohomogeneity one actions of connected, closed Lie subgroups of $U(1,n)$ on $\adss$.

\begin{theorem} \label{Th.-C1-Liesub-SU(1,n)}
Let $H\subset U(1,n)$, $n\geq 2$, be a closed, connected Lie subgroup that acts with cohomogeneity one on $\adss$. Then the action of $H$ is orbit equivalent to one of the following:
\begin{enumerate}[{\rm (1)}]
\item The action of a subgroup of the form $FN$, where
    \begin{enumerate}[{\rm (a)}]
    \item $F=A$, or\label{th:FN:A}
    \item $F=K_0$, or\label{th:FN:K0}
    \item $F=F_c$, where the $F_c$ is generated by exponentiating the matrix
    \[
    X_c=\left(\begin{array}{@{}c|c|c@{}}
    {\ii c} & 1 & 0\\
    \hline
    1 & {\ii c} & 0\\
    \hline
    0 & 0 & 0
    \end{array}\right),
    \text{ with $c\in\bR$, $c\neq 0$.}
    \]\label{th:FN:Fc}
    \end{enumerate}
    In this case all orbits are principal and the orbit space $\adss / H$ is $S^1$.\label{th:FN}

\item The action of $S(U(1,k) \times U(n-k))$, $k \in \{ 0, 1, \ldots , n-1\}$; there is one singular orbit which is isometric to $AdS^{2k+1}$ and every principal orbit is a tube around the singular orbit.\label{th:su1k}

\item The action of $S^1 SO^0(1,n)$; there is one singular orbit obtained as $S^1\cdot(\bR^{1,n}\cap\adss)$, where $\bR^{1,n}=\spann_\bR\{e_0,\dots,e_n\}$, and the rest of the orbits are tubes around this one.\label{th:so1n}

\item The action of $N^0_K(S)S$, where $S \subset SU(1,n)$ is the connected Lie subgroup with the Lie algebra $\g{s}=\g{a} \oplus \g{w} \oplus \g{g}_{2\alpha}$, and $\g{w}$ is a real subspace of $\g{g}_\alpha \cong \bC^{n-1}$ such that $\g{w}^\perp$ is totally real; the orbit through $e_0$ is the intersection $\bW \cap \adss$, where
\[
\bW  = \spann_{\bC} \{e_0, e_1, e_2,\dots, e_{n-r}\}\oplus\spann_\bR\{e_{n-r+1}, \dots, e_{n} \},
\]
and the other orbits are tubes around this orbit.\label{th:lohnherr}

\item The action of $N^0_K(S)S$, where $S \subset SU(1,n)$ is the connected Lie subgroup with the Lie algebra $\g{s}=\g{a} \oplus \g{w} \oplus \g{g}_{2\alpha}$, and $\g{w}$ is a linear subspace of $\g{g}_\alpha \cong \bC^{n-1}$ such that $\g{w}^\perp$ has constant K\"{a}hler angle $\varphi\in (0,\frac{\pi}{2})$; the orbit through $e_0$ is singular and corresponds to the intersection $\bW \cap \adss$ where
\[
\bW  = \spann_{\bC} \{e_0, \dots, e_{k+1}\}\oplus\spann_\bR\{f_1, \dots, f_\ell, h_1, \dots, h_\ell \},
\]
and
\begin{align*}
f_j={}&\cos(\tfrac{\varphi}{2})e_{k+j+1}+\sin(\tfrac{\varphi}{2})\ii e_{k+\ell+j+1},\\
h_j={}&\cos(\tfrac{\varphi}{2})\ii e_{k+j+1}+\sin(\tfrac{\varphi}{2})e_{k+\ell+j+1},
&j=1,\dots,\ell,
\end{align*}
being $\{e_0,\dots,e_n\}$ the canonical basis of $\bC^{1,n}$.
The other orbits are tubes around the orbit through $e_0$.\label{th:berndt-bruck}
\end{enumerate}
\end{theorem}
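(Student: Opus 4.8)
The plan is to reduce the classification on $\adss$ to the known classification of cohomogeneity one actions on $\chn$ via the Hopf fibration $\pi\colon\adss\to\chn$, and then to sort the resulting lifted actions according to whether the fiber $S^1$ is already contained in the group or must be adjoined. First I would observe that the image of $H\subset U(1,n)$ under the natural projection $U(1,n)\to U(1,n)/S^1 = PU(1,n)$ is a closed connected subgroup $\bar H$ acting on $\chn$, and that the $H$-orbits in $\adss$ project onto the $\bar H$-orbits in $\chn$. Since $\pi$ is a Riemannian submersion with one-dimensional fibers, the codimension of an $H$-orbit $H\cdot p$ in $\adss$ is either the codimension of $\bar H\cdot\pi(p)$ in $\chn$ (when the $S^1$-fiber direction is tangent to the orbit, i.e.\ $S^1\subset H$ up to conjugacy, or the orbit is ``vertical enough''), or that codimension plus one minus one again — more precisely one must track whether $\g{h}+\bR\lceil 0,0,\ii I\rceil$ agrees with $\g{h}$ on the relevant orbit. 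So the cohomogeneity one condition on $\adss$ forces $\bar H$ to act on $\chn$ with cohomogeneity one or with cohomogeneity zero (the transitive case).

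The key dichotomy is therefore: (A) $\bar H$ acts with cohomogeneity one on $\chn$, and (B) $\bar H$ acts transitively on $\chn$ but the vertical $S^1$-direction is transverse to the $H$-orbits, producing cohomogeneity one on $\adss$. In case (A) I would invoke the Berndt--Tamaru classification~\cite{BeTa07}: every cohomogeneity one action on $\chn$ is orbit equivalent to one of an explicit list — the two actions with a totally geodesic singular orbit (giving items \eqref{th:su1k} and \eqref{th:so1n} after lifting, where the singular orbit lifts to $AdS^{2k+1}$ or to $S^1\cdot(\bR^{1,n}\cap\adss)$), the Lohnherr-type action with minimal orbit and $\g{w}^\perp$ totally real (item \eqref{th:lohnherr}), the Berndt--Br\"uck actions with $\g{w}^\perp$ of constant K\"ahler angle $\varphi\in(0,\pi/2)$ (item \eqref{th:berndt-bruck}), and the homogeneous foliations; for each I would take the preimage under $\pi$ of a model orbit and identify it explicitly as an intersection $\bW\cap\adss$ using the coordinate description of $\g{su}(1,n)$ set up in Subsection~\ref{subsec:chn}. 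In case (B) I would use the classification of transitive actions on $\chn$ from~\cite{CaGaSw09}: the connected subgroups acting transitively are, up to conjugacy, essentially $AN$ together with the groups $FN$ where $F$ runs over $A$, $K_0$, and the one-parameter groups $F_c$; adjoining the vertical circle is not needed precisely because the orbits $FN\cdot p$ in $\adss$ are already of codimension one, and the orbit space is $S^1$ because the ``missing'' direction is the $\g{a}$-direction which closes up — this yields item \eqref{th:FN} with its three sub-cases.

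The steps in order: (i) set up the projection $H\mapsto\bar H$ and the orbit/codimension correspondence through $\pi$, carefully handling the vertical direction; (ii) show the cohomogeneity one hypothesis forces $\bar H$ to have cohomogeneity $\le 1$ on $\chn$, and split into the transitive and cohomogeneity one cases; (iii) in the cohomogeneity one case, run through the Berndt--Tamaru list and lift each orbit, computing the singular orbit in $\adss$ explicitly and checking its description as $\bW\cap\adss$ with the stated bases $\{f_j\},\{h_j\}$; (iv) in the transitive case, use~\cite{CaGaSw09} to enumerate the possible $\bar H$, pull back to get the $FN$ family, and verify that every orbit is principal with orbit space $S^1$; (v) finally check that no two items in the list are orbit equivalent and that the normalizers $N_K^0(S)$ appearing in \eqref{th:lohnherr} and \eqref{th:berndt-bruck} are what one gets from lifting the corresponding cohomogeneity one action on $\chn$ (this is where the constant-K\"ahler-angle invariant distinguishes the cases).

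I expect the main obstacle to be step (i) together with the subtle bookkeeping in step (iii): one must be careful that the ``lift'' of a cohomogeneity one action on $\chn$ can be taken by different groups $H$ with the same projection $\bar H$ — namely $H$ and $H S^1$ — and these may have different orbits in $\adss$ (one of codimension one that is a circle bundle over the $\chn$-orbit, the other possibly of the same dimension if the orbit was already ``full''), so establishing a clean correspondence requires identifying when the vertical $S^1$ is or is not tangent to $H\cdot p$. The explicit identification of the singular orbit as $\bW\cap\adss$, particularly the computation producing the real vectors $f_j, h_j$ of ``half K\"ahler angle'' $\varphi/2$ from a $\g{w}^\perp$ of K\"ahler angle $\varphi$, will be the most computational part, but it is a direct exponentiation of $\g{s}=\g{a}\oplus\g{w}\oplus\g{g}_{2\alpha}$ applied to $e_0$.
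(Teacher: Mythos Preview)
Your overall strategy---split via the Hopf map into the cases where $\bar H$ is transitive on $\chn$ versus cohomogeneity one on $\chn$, then invoke~\cite{CaGaSw09} and~\cite{BeTa07} respectively---is exactly the paper's approach. But your execution of case~(B) contains two genuine errors that would derail the argument.

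First, the group $K_0N$ is \emph{not} transitive on $\chn$: in Theorem~\ref{Trans-CHn} the subgroup $F\subset K_0A$ must have nontrivial projection onto $A$, and $K_0$ does not. So item~(\ref{th:FN:K0}) cannot appear in your case~(B). In the paper it arises instead in case~(A), from lifting the horosphere foliation of Theorem~\ref{C1-CHn}(\ref{C1-CHn:3}): the group $N$ acts on $\adss$ with cohomogeneity~two (not one), and one must multiply by $S^1$ to get cohomogeneity one; the paper then checks the nontrivial fact that $S^1N$ and $K_0N$ have the same orbits on $\adss$. You have simply omitted the horosphere case from your case~(A) discussion (``homogeneous foliations'' is too vague) and misplaced its output.

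Second, the classification in~\cite{CaGaSw09} does not hand you $A$, $K_0$, $F_c$ directly: it gives \emph{all} $FN$ with $F\subset K_0A$ having nontrivial $A$-projection, a much larger family. The paper does real work here: it writes out $\g{f}\oplus\g{n}$ explicitly, shows that cohomogeneity one on $\adss$ forces $\g{f}\cap\g{k}_0\subset\g{su}(n-1)$ (the ``$y=0$'' condition), and then proves by a tangent-space computation at a general point that any such $FN$ is orbit equivalent on $\adss$ to $F_cN$ for the appropriate~$c$. Your sketch skips both of these steps. Relatedly, the orbit space is $S^1$ because the \emph{vertical} $S^1$-fiber direction is transverse to the orbits, not because of anything involving $\g{a}$; the $\g{a}$-direction is tangent to every $FN$-orbit in this case.
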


The rest of this subsection is devoted to the proof of Theorem~\ref{Th.-C1-Liesub-SU(1,n)}. We begin with some general remarks.

Let $\pi\colon \adss \to \chn$ denote the Hopf map, and assume that $H$ is a closed, connected Lie subgroup of $U(1,n)$ acting with cohomogeneity one on $\adss$. Note that, since $H$ is  closed in $U(1,n)$ and $S^1$ is compact, $H$ acts properly on $\chn$. Now, let $\{g_n\}$ and $\{p_n\}$ be sequences in $H$ and $\ads$ respectively, such that $p_n \to p$ and $g_n(p_n)\to q$ in $\adss$. Then the sequences $\{\pi(p_n)\}$ and $\{g_n(\pi(p_n))\}=\{\pi(g_n(p_n))\}$ are convergent in $\chn$ and, by the fact that $H$ acts properly on $\chn$, it follows  that $\{g_n\}$ has a convergent subsequence. In particular, we conclude that $H$ acts properly on $\adss$.

Since the elements of $H$ are $\bC$-linear,  for each orbit $H\cdot p$, either $H\cdot p$ contains none of the fibers at its points, or $H\cdot p$ contains the fiber $S^1\cdot q$ at every point $q \in H\cdot p$.
\medskip

First, we assume that there is a principal orbit $H\cdot p$ that contains none of the fibers. Then the restriction map $\pi|_{H\cdot p}\colon H\cdot p \to \chn$ is (at least locally) a one-to-one smooth map. It follows that the orbit $H\cdot \pi(p)=\pi(H\cdot p)$ is of dimension $2n$, which means that $H$ acts with cohomogeneity zero, hence transitively on $\chn$ (because the action is proper). Now we have~\cite[Theorem~4.2]{CaGaSw09}

\begin{theorem}\label{Trans-CHn}
The connected groups acting transitively on $\chn$ are the full isometry group $SU(1,n)$ and the groups $FN$, where $N$ is the nilpotent factor of the Iwasawa decomposition of $SU(1,n)$ and $F$ is a connected closed Lie subgroup of $K_0 A$ with nontrivial projection onto $A$.
\end{theorem}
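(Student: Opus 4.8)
The plan is to argue entirely at the level of Lie algebras, using that $\g{g}=\g{su}(1,n)$ has real rank one, so that up to conjugacy it has a single proper parabolic subalgebra $\g{q}:=\g{k}_0\oplus\g{a}\oplus\g{n}$. Write $\chn=G/K$ with $G=SU(1,n)$, $K=S(U(1)U(n))$, keep the Cartan decomposition $\g{g}=\g{k}\oplus\g{p}$ (with Cartan involution $\theta\colon X\mapsto -X^{*}$) and the Iwasawa data $\g{g}=\g{k}\oplus\g{a}\oplus\g{n}$, $\g{g}_0=\g{k}_0\oplus\g{a}$; recall $[\g{k}_0,\g{a}]=0$ since $\g{g}_0$ centralizes $\g{a}$. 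A connected closed subgroup $H\subseteq G$ is transitive on $\chn$ iff $HK=G$, i.e.\ $\g{h}+\g{k}=\g{g}$; since $\g{g}=\g{k}\oplus(\g{a}\oplus\g{n})$ as a vector space and $\g{s}:=\g{a}\oplus\g{n}$ acts simply transitively, this says exactly that the projection of $\g{h}$ onto $\g{s}$ along $\g{k}$ is onto. The first step is a reductive/parabolic dichotomy: by the Borel--Tits theorem (or directly, via the contracting action of $\g{a}$ on $\g{n}$), $H$ is either reductive or contained in a proper parabolic subgroup, and in the latter case we may conjugate so that $\g{h}\subseteq\g{q}$.

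In the reductive case, conjugate $H$ to be $\theta$-stable; then $H=(H\cap K)\exp(\g{p}_H)$ with $\g{p}_H\subseteq\g{p}$ and $H\cap K$ maximal compact in $H$, so the orbit $H\cdot o\cong H/(H\cap K)$ has dimension $\dim\g{p}_H\le\dim\g{p}=2n$, with equality only if $\g{p}_H=\g{p}$; but $\g{p}$ generates $\g{g}$ (as $[\g{p},\g{p}]=\g{k}$), so $\g{p}_H=\g{p}$ forces $H=G$. Hence a proper reductive subgroup has orbits of dimension $<\dim\chn$ and cannot be transitive, and this case yields only $H=G=SU(1,n)$.

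For the parabolic case $\g{h}\subseteq\g{q}$, transitivity becomes surjectivity of $\g{h}\to\g{q}/\g{k}_0\cong\g{a}\oplus\g{n}$, so there is $H_0=M_0+H^{*}\in\g{h}$ with $M_0\in\g{k}_0$ and $0\neq H^{*}\in\g{a}$. Because $[\g{k}_0,\g{a}]=0$ and $[\g{k}_0,\g{g}_\lambda]\subseteq\g{g}_\lambda$, the operator $\ad(H_0)$ is block diagonal for $\g{q}=\g{k}_0\oplus\g{a}\oplus\g{n}$: on $\g{k}_0\oplus\g{a}$ it equals the skew operator $\ad(M_0)$ (purely imaginary spectrum, since $\ad(H^{*})$ vanishes there), while on $\g{n}=\g{g}_\alpha\oplus\g{g}_{2\alpha}$ it is $\ad(H^{*})+\ad(M_0)$, namely the scalars $\alpha(H^{*}),2\alpha(H^{*})$ plus a skew term, hence has spectrum with real part of one fixed nonzero sign. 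These two parts of the spectrum being disjoint, $\g{n}$ and $\g{k}_0\oplus\g{a}$ are unions of primary (generalized-eigenspace) components of $\ad(H_0)$, and since $\g{h}$ is $\ad(H_0)$-invariant it splits accordingly: $\g{h}=(\g{h}\cap\g{n})\oplus(\g{h}\cap(\g{k}_0\oplus\g{a}))=:\g{n}_{\g{h}}\oplus\g{f}$, with $\g{f}$ a subalgebra of $\g{k}_0\oplus\g{a}$ and $\g{n}_{\g{h}}$ an ideal of $\g{h}$ contained in $\g{n}$. Re-inserting this splitting into the transitivity condition forces $\g{n}_{\g{h}}=\g{n}$ and the $\g{a}$-projection of $\g{f}$ to be onto; thus $\g{h}=\g{f}\oplus\g{n}$ with $\g{f}\subseteq\g{k}_0\oplus\g{a}$ a subalgebra with nontrivial projection to $\g{a}$, and the connected subgroup $F$ with Lie algebra $\g{f}$ — replaced by its closure, which does not affect transitivity and stays inside the closed group $K_0A$ — is a connected closed subgroup of $K_0A$ with nontrivial projection onto $A$, so $H=FN$. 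Conversely, $G=SU(1,n)$ is transitive, and for such $F$ the algebra $\g{f}\oplus\g{n}$ visibly projects onto $\g{a}\oplus\g{n}$, so every such $FN$ is transitive; this completes the classification.

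I expect the main obstacle to be the first step — the reductive/parabolic alternative, i.e.\ showing that a proper transitive subgroup must fix a point at infinity of $\chn$ — since it is the only place where genuinely global (rather than purely infinitesimal) input enters, through Borel--Tits or an equivalent argument. A secondary point is the bookkeeping needed to confirm that the statement is exactly ``up to conjugacy, with $F$ taken closed'', and that no finer normalization of $F$ inside $K_0A$ is forced.
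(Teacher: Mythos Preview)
The paper does not prove Theorem~\ref{Trans-CHn}; it is quoted verbatim as \cite[Theorem~4.2]{CaGaSw09} and used as a black box in the proof of Theorem~\ref{Th.-C1-Liesub-SU(1,n)}. So there is no proof in the paper to compare against, and what you have written is an independent argument.

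Your argument is essentially correct. A couple of remarks on the points you flagged yourself. First, your worry about the existence of $H_0=M_0+H^*\in\g{h}$ with $M_0\in\g{k}_0$ and $H^*\neq 0$ is unfounded once you notice that for $\g{h}\subseteq\g{q}$ the Iwasawa projection $\g{g}\to\g{a}\oplus\g{n}$ along $\g{k}$ restricts to the obvious projection $\g{k}_0\oplus\g{a}\oplus\g{n}\to\g{a}\oplus\g{n}$; hence an element of $\g{h}$ projecting to a nonzero vector in $\g{a}$ automatically has zero $\g{n}$-component, and your spectral splitting goes through. Second, the reductive/parabolic dichotomy you invoke is indeed the only nontrivial external input; for real semisimple $G$ with finite center it is standard (Mostow, Borel--Tits) that a connected closed subgroup is either conjugate to a $\theta$-stable subgroup or normalizes a nontrivial unipotent subgroup and hence sits in a proper parabolic --- and in rank one that parabolic is unique up to conjugacy. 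With that granted, the reductive case is disposed of exactly as you say (using $[\g{p},\g{p}]=\g{k}$, which holds since $\g{su}(1,n)$ is simple), and the parabolic case yields $\g{h}=\g{f}\oplus\g{n}$ with $\g{f}\subseteq\g{k}_0\oplus\g{a}$ projecting onto $\g{a}$. The closure step for $F$ is harmless, and the converse is immediate. Your reading of the statement as ``up to conjugacy in $SU(1,n)$, with $F$ closed'' matches how the paper uses the result.
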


It follows from Theorem~\ref{Trans-CHn} that all the orbits are principal, diffeomorphic to $\chn$, and that the orbit space $\adss /H$ is homeomorphic to the circle $S^1$.

Let us consider a subgroup $FN$, as in the statement of Theorem~\ref{Trans-CHn}, acting with cohomogeneity one on the anti de Sitter spacetime $\adss$. We denote by $\g{f}$ the Lie algebra of $F$, which is a Lie subalgebra of $\g{k}_0 \oplus \g{a}$. We also denote by $\g{f}_{\g{a}}$ the projection of $\g{f}$ onto the subspace~$\g{a}$.

By Theorem~\ref{Trans-CHn}, $\g{f}_{\g{a}}=\g{a}$. Hence, we can write $\g{f}=\bR \xi_X\oplus(\g{k}_0\cap\g{f})$, where
\[
\xi_{X}=\lceil c,e_1,
\left(\begin{array}{@{}c|c@{}}
\ii c & 0\\
\hline
0 & X
\end{array}\right)\rceil,
\]
with $c\in\bR$, $X\in\g{u}(n-1)$ and $2c \ii +\tr X =0$. More specifically, we can write
\begin{equation}\label{eq:TpFNp}
\g{f}\oplus\g{n}
=\left\{
\!\left(\begin{array}{@{}c|c|c@{}}
\ii ac+\ii x +\ii y& a-\ii x & v^*\\
\hline
a+\ii x & \ii ac-\ii x+\ii y & v^*\\
\hline
v & -v & aX+Y
\end{array}\right):
\begin{aligned}
&a,x\in\bR,v\in\bC^{n-1},\\
&\lceil y,0,
\left(\begin{array}{@{}c|c@{}}
\ii y & 0\\
\hline
0 & Y
\end{array}\right)\rceil\in\g{k}_0\cap\g{f}
\end{aligned}
\right\}.
\end{equation}

It readily follows from this expression that $T_{e_0}(FN\cdot e_0)$ is at least $2n$-dimensional, and $T_{e_0}(FN\cdot e_0)$ is not $(2n+1)$-dimensional if and only if $y=0$. Thus, $FN$ acts with cohomogeneity one if and only if $y=0$ in~\eqref{eq:TpFNp}; equivalently, $Y\in\g{su}(n-1)$ in~\eqref{eq:TpFNp}.

We take the matrix $X_c$ as in the statement of Theorem~\ref{Th.-C1-Liesub-SU(1,n)}~(\ref{th:FN:Fc}) and define $\g{f}_c=\bR X_c$. We denote by $F_c$ the connected subgroup of $K_0A$ whose Lie algebra is $\g{f}_c$. We now prove that $FN$ has the same orbits as $F_cN$. It will actually be enough to show that $(\g{f}\oplus\g{n})\cdot\tilde{p}=(\g{f}_c\oplus\g{n})\cdot\tilde{p}$ for each $\tilde{p}\in\adss$, by uniqueness of integral submanifolds of an integrable distribution.

Let $\tilde{p}=(p_0,p_1,p)\in\adss$.
A generic element of $T_{\tilde{p}}(FN\cdot\tilde{p})=(\g{f}\oplus\g{n})\cdot\tilde{p}$, according to~\eqref{eq:TpFNp}, is written as
\[
\begin{pmatrix}
a(\ii cp_0+p_1)+\ii x(p_0-p_1)+v^*p\\
a(p_0+\ii c p_1)+\ii x(p_0-p_1)+v^*p\\
(p_0-p_1)v+aXp+Yp
\end{pmatrix}.
\]
On the other hand, we can write $\g{f}_c\oplus\g{n}$ as a particular case of ~\eqref{eq:TpFNp}, just by setting $y=0$, and $X=Y=0$. Now we take
\begin{align*}
y&{}=x-\ii\frac{ap^* Xp+p^*Yp}{\lvert p_0-p_1\rvert^2}\in\bR,&
w&{}=v+\frac{a}{p_0-p_1}Xp+\frac{1}{p_0-p_1}Yp\in\bC^{n-1}.
\end{align*}
It is worthwhile to note here that, for any element $X\in\g{u}(n-1)$, since $X^*=-X$ we get that $p^*Xp$ is a purely imaginary number. Taking the previous expressions into account, it follows that the previous vector equals
\[
\begin{pmatrix}
a(\ii cp_0+p_1)+\ii y(p_0-p_1)+w^*p\\
a(p_0+\ii c p_1)+\ii y(p_0-p_1)+w^*p\\
(p_0-p_1)w
\end{pmatrix}.
\]
Altogether this shows that $(\g{f}\oplus\g{n})\cdot\tilde{p}=(\g{f}_c\oplus\g{n})\cdot\tilde{p}$ as desired.
If $c=0$, then $\g{f}_c=\g{a}$ and we have that the action of $FN$ is orbit equivalent to the action of $AN$. This corresponds to Theorem~\ref{Th.-C1-Liesub-SU(1,n)}~(\ref{th:FN:A}). If $c\neq 0$, the action of $FN$ is orbit equivalent to the action of $F_cN$ on $\adss$ as in Theorem~\ref{Th.-C1-Liesub-SU(1,n)}~(\ref{th:FN:Fc}).

\medskip

In what follows we assume that every principal orbit $H\cdot p$ contains the fiber $S^1\cdot q$ for all $q\in H\cdot p$. Since, by the assumption, $H$ acts with cohomogeneity one on $\adss$ it follows that the action of $H$ on $\chn$ is also of cohomogeneity one. So, in order to complete the proof of Theorem~\ref{Th.-C1-Liesub-SU(1,n)} we have to lift cohomogeneity one actions on $\chn$ to cohomogeneity one actions on $\adss$.
The following theorem by Berndt and Tamaru \cite{BeTa07} gives a complete classification of cohomogeneity one actions on $\chn$, up to orbit equivalence.

\begin{theorem}\label{C1-CHn}
A real hypersurface in $\chn$, $n\geq 2$, is homogeneous if and only if it is holomorphically congruent to one of the following hypersurfaces:
\begin{enumerate}[{\rm (i)}]
\item a tube around a totally geodesic $\chk$, for some $k\in \{ 0, \ldots , n-1 \}$,\label{C1-CHn:1}
\item a tube around a totally geodesic $\bR H^n$,\label{C1-CHn:2}
\item a horosphere,\label{C1-CHn:3}
\item the Lohnherr hypersurface $W^{2n-1}$ or one of its equidistant hypersurfaces,\label{C1-CHn:4}
\item a tube around a Berndt-Bruck submanifold $W^{2n-k}_\varphi$ for some $\varphi \in (0, \pi/2]$ and some $k \in \{ 2, \ldots, n-1 \}$, where $k$ is even if $\varphi \neq \pi/2$.\label{C1-CHn:5}
\end{enumerate}
\end{theorem}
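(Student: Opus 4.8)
The plan is to recast the statement as a classification of cohomogeneity one actions: a real hypersurface of $\chn$ is homogeneous exactly when it is a principal orbit of a connected closed subgroup $H\subset G=SU(1,n)$ acting with cohomogeneity one, so it suffices to classify such actions up to orbit equivalence by holomorphic isometries. Since $\chn$ is a Hadamard manifold, the orbit space is either $\bR$ or $[0,1)$: the quotient cannot be $S^1$ because $\chn$ is simply connected, and it cannot be $[0,1]$ because that would force $\chn$ to be compact. Hence there is at most one singular orbit, and the proof splits into the \emph{foliation case} (orbit space $\bR$, all orbits principal) and the \emph{singular case} (orbit space $[0,1)$, a unique singular orbit $S$ with every other orbit a tube around it). In the singular case the action is determined up to orbit equivalence by $S$, so the problem becomes that of deciding which homogeneous submanifolds arise as singular orbits and checking that their tubes are homogeneous.

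For the foliation case I would use that the solvable Iwasawa group $AN$ acts simply transitively on $\chn$, so the foliation is orbit equivalent to one whose leaves are the orbits of a connected subgroup of $AN$, equivalently the foliation generated by a codimension one subalgebra $\g{s}$ of $\g{a}\oplus\g{n}$. An elementary analysis of such subalgebras shows that either $\g{s}=\g{n}$, which yields the horosphere foliation of item~(\ref{C1-CHn:3}), or $\g{s}=\g{a}\oplus(\g{n}\ominus\bR\xi)$ with $\xi$ a unit vector; the subalgebra condition forces $\xi\in\g{g}_\alpha$ (not $\g{g}_{2\alpha}$), all such $\xi$ are conjugate under $K_0A$, and the distinguished minimal leaf is the Lohnherr hypersurface of item~(\ref{C1-CHn:4}).

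In the singular case I would first suppose that $S$ is totally geodesic. Invoking the classification of totally geodesic submanifolds of $\chn$---points, the complex $\chk$, and the real forms $\bR H^k$---I would compute the shape operators of the tubes around each candidate and determine when these tubes are homogeneous. This succeeds precisely for $S=\chk$ with $k\in\{0,\dots,n-1\}$ (with $k=0$ a point, whose tubes are geodesic spheres) and for $S=\bR H^n$, producing items~(\ref{C1-CHn:1}) and~(\ref{C1-CHn:2}); the proper real forms $\bR H^k$ with $k<n$ are ruled out because their tubes fail to be homogeneous. If instead $S$ is not totally geodesic, the structural heart of the argument is a dichotomy from the geometry at infinity: $H$ must fix a point of the ideal boundary $\partial_\infty\chn$, so after conjugation $\g{h}$ lies in the parabolic subalgebra $\g{k}_0\oplus\g{a}\oplus\g{n}$ and $S$ sits inside a horosphere. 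This reduces $S$ to the \emph{nilpotent construction}: it is the orbit of the subgroup whose Lie algebra is $\g{a}\oplus\g{w}\oplus\g{g}_{2\alpha}$, extended by its normalizer in $K_0$, for some real subspace $\g{w}\subset\g{g}_\alpha\cong\bC^{n-1}$.

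The crux of the proof, and the step I expect to be the main obstacle, is to show that the tubes around such an orbit are homogeneous if and only if $\g{w}^\perp$ has constant K\"ahler angle. This is a computation with the shape and Jacobi operators of the tubes in which the bracket $[\g{g}_\alpha,\g{g}_\alpha]\subset\g{g}_{2\alpha}$---encoding the complex structure $J$---is decisive: constancy of the K\"ahler angle of $\g{w}^\perp$ is exactly what makes the principal curvatures of the tubes constant along the singular orbit, equivalently what makes the normalizer in $K_0$ act transitively on the unit normal sphere. Granting this equivalence, the purely algebraic classification of real subspaces of $\bC^{n-1}$ of constant K\"ahler angle $\varphi$ supplies the moduli $\varphi\in(0,\pi/2]$ and $k=\dim\g{w}^\perp$, together with the existence constraint that $k$ be even when $\varphi\neq\pi/2$; the case $k=1$ (necessarily totally real) returns the Lohnherr foliation already found, while $k\geq 2$ gives the Berndt--Bruck submanifolds $W^{2n-k}_\varphi$ of item~(\ref{C1-CHn:5}). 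Assembling the foliation case with the totally geodesic and nilpotent subcases of the singular case exhausts all homogeneous hypersurfaces and completes the classification.
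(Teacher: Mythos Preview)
The paper does not prove this theorem. It is quoted verbatim from Berndt and Tamaru~\cite{BeTa07} and used as a black box in the proof of Theorem~\ref{Th.-C1-Liesub-SU(1,n)}; immediately after the statement the paper only records which connected subgroups of $SU(1,n)$ realize each family, again citing~\cite{BeTa07} and~\cite{BeBr01}. So there is no ``paper's own proof'' to compare your proposal against.

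That said, your outline is an accurate high-level summary of the Berndt--Tamaru argument (which in turn builds on Berndt--Br\"uck~\cite{BeBr01}): the reduction to cohomogeneity one actions, the dichotomy between the foliation case and the case with a singular orbit, the analysis of codimension one subalgebras of $\g{a}\oplus\g{n}$ giving horospheres and the Lohnherr foliation, the totally geodesic singular orbits yielding $\chk$ and $\bR H^n$, and the nilpotent construction with the constant K\"ahler angle condition on $\g{w}^\perp$ producing the Berndt--Br\"uck submanifolds. The step you flag as the crux---that homogeneity of the tubes forces $\g{w}^\perp$ to have constant K\"ahler angle, via transitivity of $N^0_{K_0}(\g{w})$ on the unit normal sphere---is indeed the heart of~\cite{BeBr01}. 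For the purposes of this paper, however, all of this is imported rather than proved.
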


The connected closed subgroups of $SU(1, n)$ that give rise to each one of the cohomogeneity one actions (up to orbit equivalence) are: (\ref{C1-CHn:1}) $S(U(1, k) \times U(n-k))$, (\ref{C1-CHn:2}) $SO^0(1, n)$, (\ref{C1-CHn:3}) $N$, (\ref{C1-CHn:4}) the connected Lie subgroup $S$ of $AN$ whose Lie algebra is $\g{s} = \g{a} \oplus \g{w} \oplus \g{g}_{2\alpha}$, where $\g{w}$ is a linear hyperplane of $\g{g}_\alpha$, (\ref{C1-CHn:5}) $N^0_K(S)S$, where $S$ is the connected Lie subgroup of $AN$ whose Lie algebra is $\g{s} = \g{a} \oplus \g{w} \oplus \g{g}_{2\alpha}$, with $\g{w}$ a real subspace of $\g{g}_\alpha$ such that $\g{w}^\perp=\g{g}_\alpha \ominus \g{w}$ (orthogonal complement of $\g{w}$ in $\g{g}_\alpha$) has dimension $k$ and constant K\"{a}hler angle $\varphi$, and $N_K^0(S)$ is the connected component of the identity of the normalizer of $S$ in~$K$.

Examples~(\ref{C1-CHn:1}) and~(\ref{C1-CHn:2}) in Theorem~\ref{C1-CHn}, correspond to cohomogeneity one actions with one totally geodesic singular orbit. The families~(\ref{C1-CHn:3}) and~(\ref{C1-CHn:4}) provide homogeneous regular foliations on $\chn$ and, hence, the corresponding cohomogeneity one actions do not have singular orbits. The families in~(\ref{C1-CHn:5}) correspond to cohomogeneity one actions with one non-totally geodesic singular orbit.


Before starting our study of the cohomogeneity one actions on odd dimensional anti de Sitter spacetimes that are related to cohomogeneity one actions on complex hyperbolic spaces, we present some general remarks that will be useful afterwards.

Note that since the action of $U(1,n)$ on $\chn$ is given by $g \pi(p)=\pi(g p)$, for any Lie subgroup $H \subset U(1,n)$ and any $p \in \adss$ we have $H\cdot p \subset \pi^{-1}\left(H\cdot \pi(p)\right)$. On the other hand, if the principal orbits of $H$ in $\adss$ contain the fiber at every point, then $\pi^{-1}\left(H\cdot \pi(p)\right) \subset H\cdot p$, that is, $H\cdot p = \pi^{-1}\left(H\cdot \pi(p)\right)$ for any regular $p$. Now let $p\in\adss$ be any point and $\lambda\in S^1$. We can find a sequence of regular points $\{p_n\}$ such that $p_n\to p$. Since $\lambda p_n\in S^1\cdot p_n\subset G\cdot p_n$ by assumption, we get that there exists a sequence $\{g_n\}\subset H$ such that $\lambda p_n=g_n p_n$. Thus, $p_n\to p$, $g_n p_n\to \lambda p$. Since $H$ acts properly, $\{g_n\}$ has a convergent subsequence $\{g_{n_k}\}$ converging to a point $g\in H$. Then, $\lambda p =\lim \lambda p_{n_k}=\lim g_{n_k}p_{n_k}=gp\in H\cdot p$. Therefore, we have proved
\begin{equation}\label{eq:fibers}
H\cdot p =\pi^{-1}(H\cdot\pi(p)),\quad\text{for all $p\in\adss$.}
\end{equation}

Suppose that the actions of two Lie subgroups $H$ and $H'$ of $U(1,n)$ on $\chn$ are orbit equivalent via an isometry $f \in U(1,n)$ which is, in particular, a $\bC$-linear map. Then $f \circ \pi = \pi \circ f$, where $f$ is considered both as an isometry on $\adss$ and an isometry on $\chn$. Hence, the actions of the groups $H$ and $H'$ are orbit equivalent on $\adss$. Thus, in order to lift a cohomogeneity one action of a Lie subgroup $H \subset U(1,n)$ on $\chn$ to a cohomogeneity one action on $\adss$, without loss of generality, one can take $H$ itself as the acting group upon $\adss$.

Now we consider the subgroup $S^1=\{\lambda I:\lambda\in S^1\subset\bC\}$ of $U(1,n)$. Since the elements of $S^1$ are diagonal matrices, they commute with any element of $U(1,n)$. In particular, if $H$ is a subgroup of $U(1,n)$, so is $S^1H$. Since $S^1$ is compact, it follows that $S^1 H$ is closed in $U(1,n)$ and the action of $S^1 H$ on $\adss$ is still proper. Furthermore, the orbits of $S^1 H$ on $\adss$ satisfy~\eqref{eq:fibers}, that is, all the orbits contain the fibers. Therefore, for any subgroup $H$ of $U(1,n)$ acting with cohomogeneity one on the complex hyperbolic space $\chn$, the group $S^1 H$ acts properly on the anti de Sitter spacetime $\adss$ with cohomogeneity one, and each orbit contains the $S^1$-fiber.

Let $\bar{M}$ be a Lorentzian manifold (in particular the anti de Sitter spacetime) and let $M\subset \bar{M}$ be a Lorentzian submanifold of $\bar{M}$, that is, a submanifold such that the induced metric of $\bar{M}$ turns out to be Lorentzian. Under these circumstances, the normal bundle $\nu M$ of $M$ has a positive definite metric at each point. For fixed $r> 0$, we define the tube of radius $r$ around~$M$ as the set
\[
M^r= \{ \exp(r\xi) : \xi \in \nu M,\ \langle\xi,\xi\rangle = 1 \}.
\]
In general, the subset $M^r$ does not need to be a regular submanifold of $\bar{M}$. However, under mild conditions it can be proved that it is a hypersurface. In our case, the submanifold $M$ will be a homogeneous submanifold of $\adss$, and tubes around it will indeed be hypersurfaces.

In what follows, we will apply this method to lift the cohomogeneity one actions on the complex hyperbolic space~$\chn$ given by Theorem~\ref{C1-CHn} to the anti de Sitter spacetime~$\adss$.

\subsubsection*{Lifting the action of $S(U(1,k) \times U(n-k))$}

Clearly, the action of $S(U(1,k) \times U(n-k))$, $k \in \{ 0, 1, \ldots , n-1\}$, on the anti de Sitter spacetime $\adss$ is of cohomogeneity one and this action has exactly one singular orbit which is isometric to $AdS^{2k+1}$. Every principal orbit is a tube around this totally geodesic singular orbit. This corresponds to case~(\ref{th:su1k}) of Theorem~\ref{Th.-C1-Liesub-SU(1,n)}.

\subsubsection*{Lifting the action of $SO^o(1,n)$}

We now consider the group $H=S^1 SO^0(1,n)$ acting on $\adss$. We have that $SO^0(1,n)\cdot e_0=\{(x_0,x_1,\dots,x_n\}\in\bR^{1,n}:-x_0^2+x_1^2+\dots +x_n^2=-1\}$ is a totally real, totally geodesic, anti de Sitter spacetime of dimension $n$ in $\adss$, which is obtained by intersecting the subspace $\bV=\bR^{1,n}=\spann_\bR\{e_0,e_1,\dots,e_n\}$ with $\adss$. Hence \begin{align*}
H\cdot e_0&{}=S^1(\bV\cap\adss)\\
&{}=\{\lambda(x_0,x_1,\dots,x_n)\in\bC^{1,n}:\lambda\in S^1\subset\bC,\ x_i\in\bR,\ -x_0^2+x_1^2+\dots+x_n^2=-1\}.
\end{align*}
Since the action is of cohomogeneity one, the rest of the orbits are tubes around this one, and we obtain~(\ref{th:so1n}) of Theorem~\ref{Th.-C1-Liesub-SU(1,n)}.

\subsubsection*{Lifting the action of $N$}

The complex hyperbolic space $\chn$ is diffeomorphic to $AN$ and the space $\adss$ is a principal bundle over $\chn$ with fiber $S^1$. So, in order to determine all orbits of $N$ in $\adss$ it is sufficient to find the orbits of $N$ through the points in the fibers over an orbit of $A$ in $\chn$. Then, we will get the orbits of $S^1 N$ just by letting $S^1$ act on the orbits of $N$.

First we write down the explicit expression of the elements of the groups $A$ and $N$ as the image of matrices in $\g{a}$ and $\g{n}$ under the exponential map:
\begin{equation}\label{eq:ExpAN}
\begin{aligned}
\Exp(\lceil 0,(x,0),0\rceil)&{}=
\left(\begin{array}{@{}cc|c@{}}
\cosh(x)& \sinh(x)	&  0 \\
\sinh(x)& \cosh(x)	&  0 \\ \hline
0 & 0  & I
\end{array}\right),\\
\Exp(\lceil\mu,(\ii\mu,v),\left(\begin{array}{@{}c|c@{}} -\ii\mu & v^* \\ \hline -v & 0 \end{array}\right)\rceil)&{}=
I +
\left(\begin{array}{@{}cc|c@{}}
\ii\mu + \frac{1}{2}\langle v,v \rangle	& -\ii\mu - \frac{1}{2}\langle v,v \rangle &  v^*	\\
\ii\mu + \frac{1}{2}\langle v,v \rangle	& -\ii\mu - \frac{1}{2}\langle v,v \rangle &  v^*	\\ \hline
v & -v & 0
\end{array}\right),
\end{aligned}
\end{equation}
where $x$, $\mu\in\bR$, and $v\in\bC^{n-1}$.

Using~\eqref{eq:ExpAN} we get
$A\cdot e_0= \{ \cosh(x)e_0 + \sinh(x)e_1 : x \in \bR \}$.
Thus, any point in the $S^1$-fiber at a point of the orbit $A\cdot \pi(e_0)$ is of the form
\[
p_{x,\lambda}=\lambda (\cosh(x)e_0 + \sinh(x)e_1),
\]
for some $x \in \bR$ and $\lambda \in S^1$. We find the orbits of $N$ through the points $p_{x,\lambda}$. Let $\bV \subset \bC^{1,n}$ be the linear subspace
\[
\bV=\spann_\bR \{e_0+e_1, \ii (e_0+e_1), e_2, \ii e_2, \ldots , e_{n}, \ii e_{n}\}.
\]
Using~\eqref{eq:ExpAN} once more,
\begin{align*}
N\cdot p_{x,\lambda}	&{}=p_{x,\lambda}+ \lambda e^{-x} \Bigl\{ \mu \ii (e_0 + e_1) + \frac{1}{2}\langle v,v \rangle (e_0 + e_1) + v : \mu \in \bR,\ v \in \bC^{n-1}\Bigr\} \in p_{x,\lambda}+\bV.
\end{align*}
So, $N\cdot p_{x,\lambda} \subset (p_{x,\lambda}+\bV) \cap \adss$. For the reverse inclusion, let $q = p_{x,\lambda} + z(e_0+e_1) + w \in \adss \cap (p_{x,\lambda}+\bV)$,
with $z\in \bC$, $w\in\spann_\bC\{e_2,\dots,e_n\}\cong\bC^{n-1}$. Taking
\begin{align*}
\mu &{}= e^x\mathop{\rm Im} (z\bar{\lambda}),&
v&{}=e^x\bar{\lambda}w,&
X&{}=\lceil\mu,(\ii\mu,v),\left(\begin{array}{@{}c|c@{}} -\ii\mu & v^* \\ \hline -v & 0 \end{array}\right)\rceil,
\end{align*}
we get $q=\Exp(X)\cdot p_{x,\lambda}$. Thus
$N\cdot p_{x,\lambda} = \adss \cap (p_{x,\lambda} + \bV)$, for all $x \in \bR$, $\lambda \in S^1$, and therefore
\begin{equation} \label{N-Orbit-SU(1,n)}
N\cdot p = \adss \cap (p + \bV), \quad \text{for all $p\in \adss$.}
\end{equation}

The above description of the orbits of $N$, in particular, implies that $N$ acts with cohomogeneity $2$ on $\adss$. Recall that the subgroup $K_0$ whose Lie algebra is $\g{k}_0$ is isomorphic to $S(U(1)U(n-1))$. More explicitly, we can write
\[
K_0=	
\left\{
\begin{pmatrix}
\ii\mu	& & \\
 & \ii\mu & \\
 & & X
\end{pmatrix}
: \mu \in S^1,\ X \in U(n-1),\ \mu^2\det X =1
\right\}.
\]
It is then easy to check that $S^1N\cdot p=K_0 N\cdot p$ for all $p\in\adss$. Hence, the action of $S^1N$ on $\adss$, which is obtained by lifting the action of $N$ on $\chn$ to the anti de Sitter spacetime, corresponds to Theorem~\ref{Th.-C1-Liesub-SU(1,n)}~(\ref{th:FN:K0}).

\subsubsection*{Lifting the tubes around the Berndt-Br\"{u}ck submanifolds of totally real normal bundle}

Let $S$ be the connected Lie subgroup of $SU(1,n)$ whose Lie algebra is $\g{s}=\g{a} \oplus \g{w} \oplus \g{g}_{2\alpha}$, where $\g{w}$ is a real subspace of $\g{g}_\alpha \cong \bC^{n-1}$ such that $\g{w}^\perp$, the orthogonal complement of $\g{w}$ in $\g{g}_\alpha$, is a totally real subspace.
The group $N^0_K(S)S$, where $N^0_K(S)$ is the identity component of the normalizer of $S$ in $K$, acts with cohomogeneity one on $\chn$. The orbits of $S$ form a polar foliation in $\chn$~\cite{DiDoKo13}. In particular, if $\g{w}^\perp$ is $1$-dimensional, equivalently if $\g{w}$ is a hyperplane, the action of $S$ produces the ruled Lohnherr hypersurface and its equidistant hypersurfaces, which is a particular case of this construction that gives rise to the cohomogeneity one action described in Theorem~\ref{C1-CHn}(\ref{C1-CHn:4}). The calculations carried out here work for the actions described in Theorem~\ref{C1-CHn}(\ref{C1-CHn:4}) and Theorem~\ref{C1-CHn}(\ref{C1-CHn:5}) whenever $\varphi=\pi/2$, so we do these two cases simultaneously. In order to determine the orbits of $N^0_K(S)S$ in $\adss$, we first find the orbits of $S$ in $\adss$ and then consider the action of $N^0_K(S)$ on the orbits of $S$.

The subspace $\g{w}$ decomposes as the orthogonal direct sum of a complex subspace of $\g{g}_\alpha$, and a totally real subspace of $\g{g}_\alpha$~\cite{DiDoKo13}. We may therefore assume, without loss of generality, that $\g{w}\cong\bC^{n-1-r}\oplus\bR^r$. For the purpose of the calculations that follow, we choose a complex basis $\{e_2,\dots,e_n\}$ in such a way that $\g{w}=\g{w}_0\oplus\g{w}_{\pi/2}$, where $\g{w}_0=\spann_\bC\{e_2, \dots, e_{n-r}\}\cong\bC^{n-r-1}$ and $\g{w}_{\pi/2}=\spann_\bR\{e_{n-r+1}, \dots, e_{n} \}\cong\bR^r$, for some $1\leq r\leq n-1$.

An element $X\in\g{s}$ is of the form
\[
X=
\left(
\begin{array}{@{}cc|cc@{}}
\ii\mu 	& x-\ii\mu 	& z^*	& u\trsp 	\\
x+\ii\mu & -\ii\mu 	& z^*	& u\trsp 	\\ \hline
z & -z 	& \multicolumn{2}{c}{\multirow{2}{*}{0}} \\
u& -u	& &	
\end{array}
\right),
\]
where $x$, $\mu \in \bR$, $z \in \bC^{n-r-1}$, and $u \in \bR^r$. Then,
\begin{equation} \label{eX-S-SU(1,n)-I}
\begin{aligned}
\!\Exp(X)={}		
& I + \frac{\cosh(x)-1}{x}\!
\left(
\begin{array}{@{}cc|cc@{}}
x+\frac{1}{x}(\langle z,{z}\rangle+\langle u,u \rangle)	& \frac{1}{x}(\langle z,{z}\rangle+\langle u,u \rangle)			& z^*	& u\trsp	\\
\frac{1}{x}(\langle z,{z}\rangle+\langle u,u \rangle)		& x-\frac{1}{x}(\langle z,{z}\rangle+\langle u,u \rangle)		& z^*	& u\trsp	 \\ \hline
-z& z& \multicolumn{2}{c}{\multirow{2}{*}{0}}\\
-u& u& &
\end{array}
\right)\! \\
& + \frac{\sinh(x)}{x}
\left(
\begin{array}{@{}cc|cc@{}}
\ii\mu 	& x-\ii\mu 	& z^*	& u\trsp	\\
x+\ii\mu & -\ii\mu 	& z^*	& u\trsp	\\ \hline
z& -z	& \multicolumn{2}{c}{\multirow{2}{*}{0}}\\
u& -u	& &
\end{array}
\right)\in S.
\end{aligned}
\end{equation}
Hence, we have
\begin{equation} \label{S.e1-SU(1,n)}
S\cdot e_0=
\left\{	
\begin{pmatrix}
\frac{\cosh(x)-1}{x^2}(\langle z,{z}\rangle+\langle u,u \rangle) + \cosh(x) + \ii\mu \frac{\sinh(x)}{x} \\
\frac{\cosh(x)-1}{x^2}(\langle z,{z}\rangle+\langle u,u \rangle) + \sinh(x) + \ii\mu \frac{\sinh(x)}{x} \\
\frac{1-e^{-x}}{x}z \\
\frac{1-e^{-x}}{x}u
\end{pmatrix}:
\begin{array}{l}
z \in \bC^{n-r-1}, \\
u \in \bR^r, \\
x, \mu \in \bR
\end{array}
\right\}.
\end{equation}
We define the subspace $\bV=\spann_\bR \{ e_0, e_1, \ii (e_0+e_1) \} \oplus \g{w}$. We also define the half-subspaces $\bV_+=\{v \in \bV : \langle v,e_0\rangle <0 \}$ and $\bV_-=\{v \in \bV : \langle v,e_0\rangle >0 \}$ (note that $e_0$ is timelike). Obviously,~\eqref{S.e1-SU(1,n)} implies $S\cdot e_0 \subset \adss \cap \bV_+$. We now check the reverse inclusion.

Let
$q=a_0e_0+a_1e_1+b \ii (e_0+e_1)+w+v \in \adss \cap \bV_+$,
where $a_0$, $a_1$, $b\in\bR$, $w\in\g{w}_{0}$ and $v\in\g{w}_{\pi/2}$. Note that $\langle q,q\rangle=-1$ implies $a_0\geq 1$ and $a_0>a_1$. Then,
\[
\Exp\left(
\begin{array}{@{}cc|cc@{}}
\ii\mu 	& x-\ii\mu 	& z^*	& u\trsp 	\\
x+\ii\mu & -\ii\mu 	& z^*	& u\trsp 	\\ \hline
z & -z 	& \multicolumn{2}{c}{\multirow{2}{*}{0}} \\
u& -u	& &	
\end{array}
\right)\cdot e_0=q,\
\text{ where }
\left\{
\begin{aligned}
x&=-\ln(a_0-a_1), &
\mu &= \frac{x}{\sinh(x)}b, \\
z&=\frac{x}{1-e^{-x}}w, &
u&=\frac{x}{1-e^{-x}}v.
\end{aligned}
\right.
\]
Therefore $S\cdot e_0 = \adss \cap \bV_+$.

In order to determine all the orbits of $S$ on $\adss$, we only need to find the orbits through the points of the form $p=\lambda(x_0e_0 + \ii v)$, for $\lambda \in S^1$ and $v\in\g{w}_{\pi/2}$. We assume $x_0 >0$. Then, taking $g\in S$ as in~\eqref{eX-S-SU(1,n)-I}, and using~\eqref{S.e1-SU(1,n)} we get
\begin{align*}
g\cdot p&{}=g(\lambda(x_0 e_0+\ii v))=\lambda(x_0 g e_0+\ii gv)
=\lambda\Bigl(x_0 g e_0+\frac{e^x-1}{x}\langle u,v\rangle\ii(e_0+e_1)+\ii v\Bigr)\\
&{}=\lambda(x_0 e_0+\ii v)+\lambda\Bigl(x_0 g e_0+\frac{e^x-1}{x}\langle u,v\rangle\ii(e_0+e_1)-x_0e_0\Bigr)\in p+\lambda\bV_+.
\end{align*}
Thus, $S\cdot p\subset(p+\lambda\bV_+)\cap\adss$.

In order to verify that the reverse inclusion holds, since the elements of $S$ are $\bC$-linear, it suffices to take $q=(x_0e_0+\ii v)+(y_0e_0+y_1e_1+b\ii(e_0+e_i)+w+v')\in(p+\bV_+)\cap\adss$ and check that there exists $g\in S$ such that $g(x_0e_0+\ii v)=q$. In the notation of~\eqref{eX-S-SU(1,n)-I} this is achieved by taking
\[
\begin{aligned}
x&=-\frac{\ln(x_0+y_0-y_1)}{x_0}, &
\mu &= \frac{x}{x_0\sinh(x)}\Bigl(b-\frac{e^x-1}{x}\langle u,v\rangle\Bigr), \\
z&=\frac{x}{x_0(1-e^{-x})}w, &
u&=\frac{x}{x_0(1-e^{-x})}v'.
\end{aligned}
\]
Therefore, we have $S\cdot p = p + \lambda\bV_+$ for $p=\lambda(x_0e_0 + v)$ with $x_1 >0$, $\lambda \in S^1$ and $v\in\ii\g{w}_{\pi/2}$. In a similar way, we get $S\cdot p = p + \lambda\bV_-$ if $x_0 < 0$.

Now that we have found the orbits of $S$, in order to describe the orbit of the group $N^0_K(S)S$ through a point $p \in \adss$ we let the group $N^0_K(S)$ act on the orbit $S\cdot p$. Again, we may assume $p=\lambda(x_0e_0 + \ii v)$ with $\lambda\in S^1$, $x_0>0$ and $v\in\g{w}_{\pi/2}$. The identity component of the normalizer of $S$ in $K$ is $N^0_K(S)=S(U(1)U(n-r-1) O(r))$ (see~\cite{DiDoKo13}). We have  $N_K(S)\cdot(x_0 e_0+\ii v)=S^1\cdot e_0+\ii SO(r)\cdot v$, so $\ii (SO(r)\cdot v)$ is an $(r-1)$-dimensional sphere in $\ii\g{w}_{\pi/2}$.  Moreover, $N_K(S)\cdot\bV_\pm=\bW$, where
$\bW=\spann_{\bC}\{e_0,e_1\}\oplus\g{w}$. This implies
\begin{align*}
N^0_K(S)S\cdot p	={}	& N^0_K(S)\cdot (S\cdot p)= N^0_K(S)\cdot \bigl((p + \lambda \bV_+) \cap \adss \bigr)\\
	 ={}	& \bigl(N^0_K(S)\cdot p + \lambda N^0_K(S)\cdot  \bV_+\bigr) \cap \adss \\
	 ={}	& \left(SO(r)\cdot\ii v) +  \bW\right) \cap \adss,
\end{align*}
and hence, the group $N^0_K(S)S$ acts with cohomogeneity one on $\adss$. The orbit through $e_0$ is the intersection $\bW \cap \adss$, and the other orbits are tubes around this one. This is part~(\ref{th:lohnherr}) of Theorem~\ref{Th.-C1-Liesub-SU(1,n)}.

\subsubsection*{Lifting the tubes around the Berndt-Br\"{u}ck submanifolds whose normal bundle has constant K\"{a}hler angle $\varphi\in(0,\pi/2)$}

Let $\g{w}$ be a real linear subspace of $\g{g}_\alpha \cong \bC^{n-1}$ such that $\g{w}^\perp$ has constant K\"{a}hler angle $\varphi\in (0,{\pi}/{2})$. Denote by $S$ the connected Lie subgroup of $AN$ with Lie algebra $\g{s}=\g{a} \oplus \g{w} \oplus \g{g}_{2\alpha}$, and by $N^0_K(S)$ the identity component of its normalizer in $K$. Then the group $N^0_K(S)S$ acts on $\chn$ with cohomogeneity one. For the point $o=\pi(e_0) \in \chn$, we denote by  $W^{2n-r}_\varphi=N^0_K(S)S\cdot o$ the orbit of $N_K^0(S)S$ through $o$. Then, $W^{2n-r}_\varphi$ is of dimension $(2n-r)$, where $r=\dim \g{w}^\perp=2\ell$ is an even number. The other orbits are tubes around this one. More details can be found in~\cite{BeBr01}. However, a more detailed discussion on real subspaces of complex vector spaces can be found in~\cite[Section~2.3]{DiDoKo13}.

We will show that $N_K^0(S)S$ acts on $\adss$ with cohomogeneity one. The calculations that follow are very similar to case~(\ref{th:lohnherr}) of Theorem~\ref{Th.-C1-Liesub-SU(1,n)}, although somewhat more complicated. We point out the differences with the previous case here and skip the routine calculations.

As before, we identify $\g{g}_\alpha$ with a subspace of $\bC^{n-1}$. The subspace $\g{w}$ admits a decomposition $\g{w}=\g{w}_0\oplus\g{w}_\varphi$, where $\g{w}_0$ is a complex subspace of $\g{g}_\alpha$, and $\g{w}_\varphi$ is a real subspace of $\g{g}_\alpha$ with constant K\"{a}hler angle $\varphi\in(0,\pi/2)$. Recall that $\g{w}_\varphi$ has dimension $r=2\ell$, and $\bC\g{w}_\varphi=\g{w}_\varphi\oplus\g{w}^\perp$. We may assume $\g{w}_0=\spann_\bC\{e_2,\dots,e_{k+1}\}$, $\g{w}_\varphi=\spann_\bR \{ f_1, \dots, f_\ell, h_1, \dots, h_\ell \}$,
where $k+2\ell=n-1$ and
\begin{align*}
f_j={}&\cos(\tfrac{\varphi}{2})e_{k+j+1}+\sin(\tfrac{\varphi}{2})\ii e_{k+\ell+j+1},
&h_j={}&\cos(\tfrac{\varphi}{2})\ii e_{k+j+1}+\sin(\tfrac{\varphi}{2})e_{k+\ell+j+1},
&j=1,\dots,\ell.
\end{align*}
Taking into account the previous decomposition of $\g{w}$, and the expression of the vectors $\{f_1,\dots,f_\ell,h_1,\dots,h_\ell\}$ in the basis $\{e_2,\dots,e_n\}$ of $\g{g}_\alpha$, an element of $\g{s}$ can be written as
\[
\left(
\begin{array}{@{}cc|ccc@{}}
\ii\mu & x-\ii\mu & z^*	&  (u-\ii v)^*\cos(\tfrac{\varphi}{2})	& (v-\ii u)^*\sin(\tfrac{\varphi}{2})\\
x+\ii\mu & -\ii\mu & z^* & (u-\ii v)^*\cos(\tfrac{\varphi}{2}) & (v-\ii u)^*\sin(\tfrac{\varphi}{2})\\
\hline
z	& -z & 	&	&	\\
(u+\ii v)\cos(\tfrac{\varphi}{2})	& -(u+\ii v)\cos(\tfrac{\varphi}{2})	 & 	 \multicolumn{3}{c}{0}	 \\
(v+\ii u)\sin(\tfrac{\varphi}{2})	& -(v+\ii u)\sin(\tfrac{\varphi}{2})	 & 	 &	 &
\end{array}\right),
\]
where $x$, $\mu\in\bR$, $z\in\bC^k$, and $u$, $v\in\bR^\ell$.

Proceeding as in the previous part, one can see, after some calculations, that $S\cdot e_0=\bV_+ \cap \adss$ where $\bV = \spann_\bR\{e_0, e_1, \ii (e_0 + e_1) \} \oplus \g{w} \subset \bC^{1,n}$ and $\bV_+=\{v\in\bV:\langle v,e_0\rangle<0\}$ (that is, $\bV_+$ consists of the vectors in $\bV$ whose $e_0$-coefficient is positive).

In order to determine the other orbits of $S$, it is sufficient to find the orbits through the points of the form $p=\lambda (x_0e_0 + v)$, where $\lambda \in S^1$ and $v\in\g{w}^\perp$. For calculation purposes it is convenient to take the basis $\{f_1',\dots,f_\ell',h_1',\dots,h_\ell'\}$ of $\g{w}^\perp$ given by
\begin{align*}
f_j'={}&-\sin(\tfrac{\varphi}{2})e_{k+j+1}+\cos(\tfrac{\varphi}{2})\ii e_{k+\ell+j+1},
&h_j'={}&-\sin(\tfrac{\varphi}{2})\ii e_{k+j+1}+\cos(\tfrac{\varphi}{2})e_{k+\ell+j+1},
\end{align*}
for $j=1,\dots,\ell$. (Recall that $\bC\g{w}_\varphi=\g{w}_\varphi\oplus\g{w}^\perp$, and that $\dim\g{w}_\varphi=\dim\g{w}^\perp$.) For such a point $p$ we can show that $S\cdot p= (p + \lambda\bV_+) \cap \adss$ if $x_0>0$, or $S\cdot p=(p + \lambda\bV_-) \cap \adss$ if $x_0<0$, where $\bV_-$ is defined analogously.

By~\cite[Lemma~2.8]{DiDoKo13}, $N^0_K(S) = S(U(1)U(\g{w}_0)U(\g{w}_\varphi)) \cong U(k) \times U(l)$.
So, for a point $p$ as above, one has
\begin{align*}
N^0_K(S)S\cdot p	={}	& N^0_K(S)\cdot (S\cdot p)= N^0_K(S)\cdot \bigl((p + \lambda \bV_+) \cap \adss \bigr)\\
	 ={}	& \bigl(N^0_K(S)\cdot p + \lambda N^0_K(S)\cdot  \bV_+\bigr) \cap \adss \\
	 ={}	& \left(N^0_K(S)\cdot v) + \bW\right) \cap \adss,
\end{align*}
where $\bW = N^0_K(S)\cdot  \bV_+ = \spann_{\bC} \{e_0, \ii e_1\}\oplus\g{w}$. The vector $v\in\g{w}^\perp$ is also in the orthogonal complement of $\bW$ and
$N^0_K(S)\cdot v\cong U(l)\cdot v$
is a $(2\ell-1)$-dimensional sphere (see~\cite{BeBr01}). Therefore, the orbit of $N^0_K(S)S$ through $e_0$ is the intersection $\bW \cap \adss$, and the other orbits are tubes around this one. This completes the proof of Theorem~\ref{Th.-C1-Liesub-SU(1,n)}.\qed


\section{Examples of cohomogeneity one and cohomogeneity zero actions on anti de Sitter spacetimes} \label{sec:parabolicsubgroups}

In this section we present new examples of cohomogeneity zero and cohomogeneity one actions on anti de Sitter spacetimes. It turns out that there are examples of cohomogeneity zero actions which are not transitive, a phenomenon that was already pointed out in~\cite{BeDiVa}. Many examples of isometric actions are obtained from the restricted root space decomposition of the Lie algebra of $SO^0(2,n)$ which, in turn, is the way to obtain the Iwasawa decompostion of $\g{so}(2,n)$, and the Langlands decompositions of the parabolic subalgebras of $\g{so}(2,n)$.


\subsection{The Lie algebra $\g{so}(2,n)$} \label{Iwa-SO}\hfill

The Lie algebra $\g{g}=\g{so}(2,n)$ of the Lie group $G=SO^0(2 , n)$  is
\begin{align*}
\g{so}(2,n)	& {}= \{ X \in \g{gl}(n+2,\bR) : \epsilon X + X\trsp \epsilon =0, \ \tr X=0 \} \\
		& {}= \Bigl\{ \begin{pmatrix} A & u\trsp \\ u & B \end{pmatrix} : A \in \mathfrak{so}(2), B \in \mathfrak{so}(n), u \in \mathcal{M}_{n\times 2} \Bigr\},
\end{align*}
where $\epsilon = \diag (-I_2,I_n)$, and $\mathcal{M}_{n\times 2}$ denotes the vector space of matrices with $n$ rows and $2$ columns. The real Lie algebra $\g{so}(2,n)$ is simple for $n\geq 3$. Thus, we assume $n\geq 3$ from now on. The Killing form $\B$ on $\g{g}$ is given by
\begin{equation}\label{killingform}
\B (X,Y)=\tr (\ad X \circ \ad Y)=n\tr XY, \quad \text{ for all $X$, $Y \in \g{so}(2,n)$.}
\end{equation}
The Cartan involution $\theta(X)=-X\trsp$ gives rise to the Cartan decomposition $\g{so}(2,n) = \g{k} \oplus \g{p}$, where
\begin{align*}
\g{k}&{}=\left\{
\begin{pmatrix}
A & 0\\
0 & B
\end{pmatrix}:A\in\g{so}(2),B\in\g{so}(n)\right\}\cong\g{so}(2)\oplus\g{so}(n),&
\g{p}&{}=\left\{
\begin{pmatrix}
0 & u\trsp\\
u & 0
\end{pmatrix}:u\in\mathcal{M}_{n\times 2}\right\}.
\end{align*}
The Killing form of $\g{so}(2,n)$ is positive definite on $\g{p}$ and negative definite on $\g{k}$. Thus, the formula $\B_\theta (X,Y)=-\B(X,\theta Y)$ defines an inner product on $\g{g}$.

From now on we make the following choice of a maximal flat $\g{a}$ of $\g{p}$ by setting
\[
\g{a}=\left\{
H_{a,b}=\left(\begin{array}{@{}c|c|c}
0&
\begin{array}{cc}
a&0\\
0&b
\end{array}&0\\
\hline
\begin{array}{cc}
a&0\\
0&b
\end{array}&
0&0\\
\hline
0&0&0
\end{array}\right)\in\g{so}(2,n)
:a,b\in\bR\right\}.
\]
Recall that a restricted root space is defined as $\g{g}_\lambda=\{X\in\g{g}:[H,X]=\lambda(H)X,\forall H\in\g{a}\}$ for each covector $\lambda\in\g{a}^*$, whenever $\g{g}_\lambda$ is nonzero.
We consider the covectors $\alpha_1$, $\alpha_2\in \g{a}^*$ by setting $\alpha_1\left(H_{a,b}\right) =-a+b$, $\alpha_2\left(H_{a,b}\right) =a$. It turns out that $\g{so}(2,n)$ is a real Lie algebra of type $B_2$ whose set of restricted roots is $\Sigma=\{\pm\alpha_1,\pm\alpha_2,\pm(\alpha_1+\alpha_2),
\pm(\alpha_1+2\alpha_2)\}$. We choose a criterion of positivity so that the set of positive roots is $\Sigma^+=\{\alpha_1,\alpha_2,\alpha_1+\alpha_2,
\alpha_1+2\alpha_2\}$. Then, the set of simple roots is precisely $\Lambda=\{\alpha_1,\alpha_2\}$. The root spaces are calculated explicitly in Figure~\ref{fig:root-spaces}.

\begin{figure}[t]
{\scriptsize
\begin{gather*}
\g{g}_{\alpha_1}=
\bR\left(\begin{array}{@{}c|c|c}
\begin{array}{cc}
0&1\\
-1&0
\end{array}&
\begin{array}{cc}
0&-1\\
-1&0
\end{array}&0\\
\hline
\begin{array}{cc}
0&-1\\
-1&0
\end{array}&
\begin{array}{cc}
0&1\\
-1&0
\end{array}&0\\
\hline
0&0&0
\end{array}\right),\qquad
\g{g}_{\alpha_2}=
\left\{\left(\begin{array}{@{}c|c|c}
0&0&
\begin{array}{c}
v\trsp\\ 0
\end{array}\\
\hline
0&0&
\begin{array}{c}
v\trsp\\ 0
\end{array}\\
\hline
\begin{array}{cc}
v& 0
\end{array}&
\begin{array}{cc}
-v&0
\end{array}&0
\end{array}\right):v\in\bR^{n-2}\right\},\\
\g{g}_{\alpha_1+\alpha_2}=
\left\{\left(\begin{array}{@{}c|c|c}
0&0&
\begin{array}{c}
0\\ w\trsp
\end{array}\\
\hline
0&0&
\begin{array}{c}
0\\ w\trsp
\end{array}\\
\hline
\begin{array}{cc}
0& w
\end{array}&
\begin{array}{cc}
0&-w
\end{array}&0
\end{array}\right):v\in\bR^{n-2}\right\},\qquad
\g{g}_{\alpha_1+2\alpha_2}=
\bR\left(\begin{array}{@{}c|c|c}
\begin{array}{cc}
0&1\\
-1&0
\end{array}&
\begin{array}{cc}
0&-1\\
1&0
\end{array}&0\\
\hline
\begin{array}{cc}
0&1\\
-1&0
\end{array}&
\begin{array}{cc}
0&-1\\
1&0
\end{array}&0\\
\hline
0&0&0
\end{array}\right).
\end{gather*}}
\caption{Positive root spaces of $\g{so}(2,n)$ with respect to $\g{a}$.}\label{fig:root-spaces}
\end{figure}

The long simple root of this root system is $\alpha_1$, which has multiplicity one, and the short simple root, $\alpha_2$, has multiplicity $n-2$. Recall that the root spaces corresponding to the negative roots are determined by the equation $\g{g}_{-\lambda}=\theta\g{g}_\lambda$. It is also well known that $[\g{g}_\lambda,\g{g}_\mu]=\g{g}_{\lambda+\mu}$ for any $\lambda$, $\mu\in\g{a}^*$. Finally, $\g{g}_0=\g{k}_0\oplus\g{a}$, where $\g{k}_0=\g{g}_0\cap\g{k}$ is the normalizer of $\g{a}$ in $\g{k}$. In this case $\g{k}_0\cong\g{so}(n-2)$.

We now define $\g{n}$ to be the vector space direct sum of the positive root spaces, that is, $\g{n}=\g{g}_{\alpha_1}\oplus\g{g}_{\alpha_2}
\oplus\g{g}_{\alpha_1+\alpha_2}\oplus\g{g}_{\alpha_1+2\alpha_2}$. Then, $\g{n}$ is a 3-step nilpotent subalgebra of $\g{g}$ and we have the so-called Iwasawa decomposition $\g{g}=\g{k}\oplus\g{a}\oplus\g{n}$. If we denote by $K$, $A$ and $N$ the corresponding connected subgroups of $G$ whose Lie algebras are $\g{k}$, $\g{a}$ and $\g{n}$, respectively, then we also get the Iwasawa decomposition $G=KAN$.


\subsection{Cohomogeneity one actions of Lie subgroups of $AN$ containing $N$}\label{sec:N}\hfill

First, we determine the orbits of the nilpotent factor $N$ in the Iwasawa decomposition $SO^0(2,n)=KAN$. For that matter, let $p=(p_1, \dots, p_{n+2})\in \ads$, that is, $-p_1^2-p_2^2+p_3^2+\dots+p_{n+2}^2=-1$.

The tangent space of the orbit $N\cdot p$ at $p$ is $T_p(N\cdot p)=\g{n}\cdot p$, which can be calculated using the expressions obtained in Subsection~\ref{Iwa-SO}. If $p_2=p_4$, then $\lvert p_1\rvert\neq\lvert p_3\rvert$, and in this case we get $(\g{g}_{\alpha_1}\oplus\g{g}_{\alpha_1+\alpha_2}
\oplus\g{g}_{\alpha_1+2\alpha_2})\cdot p\subset\spann\{e_2+e_4\}$; in particular,
\begin{align}\label{N-sing-Tan-Sp}
\g{n}\cdot p&{}=\spann\{e_2+e_4\}\oplus\g{g}_{\alpha_2}\cdot p,&  \dim(\g{n}\cdot p)&{}=n-1,&
\text{if $p_2=p_4$.}
\end{align}
If $p_2 \neq p_4$, then $\g{g}_{\alpha_2}\cdot p\subset(\g{g}_{\alpha_1}\oplus\g{g}_{\alpha_1+\alpha_2})\cdot p$, and thus,
\begin{align}\label{N-prin-Tan-Sp}
\g{n}\cdot p&{}=(\g{g}_{\alpha_1}\oplus\g{g}_{\alpha_1+\alpha_2}
\oplus\g{g}_{\alpha_1+2\alpha_2})\cdot p,&
\dim(\g{n}\cdot p)&{}=n,&
\text{if $p_2\neq p_4$.}
\end{align}
Therefore, we have proved that $N$ acts with cohomogeneity one on $\ads$.

We now obtain a more explicit form of the orbits of $N$ on $\ads$. According to the results of Figure~\ref{fig:root-spaces}, an element $X\in \g{n}$ can be written as
\begin{equation}\label{gen-X-in-n}
X=\left(
\begin{array}{@{}cc|cc|c}
0 & b+a & 0 & -b-a & {v\trsp} \\
-b-a & 0 & -b+a & 0 & {w\trsp}\\
\hline
0 & -b+a & 0 & b-a & {v\trsp}\\
-b-a & 0 & -b+a & 0 & {w\trsp}\\
\hline
v & w & -v & -w & 0
\end{array}\right),
\text{ where $a$, $b\in\bR$, $v$, $w\in\bR^{n-2}$.}
\end{equation}

If $p_2=p_4$, according to~\eqref{N-sing-Tan-Sp} one may assume $b=0$, $w=0$, and thus
\[
\exp(X)=
\left(\begin{array}{@{}cc|cc|c}
1+\frac{1}{2}\langle v,v\rangle &  a	& -\frac{1}{2}\langle v,v\rangle & -a	 & v\trsp \\
-a & 1	& a & 0 & 0\\
\hline
\frac{1}{2}\langle v,v\rangle	& a & 1-\frac{1}{2}\langle v,v\rangle	& -a	 & v\trsp \\
-a & 0 & a & 1 & 0 \\
\hline
v & 0 & -v & 0	& I_{n-2}
\end{array}\right).
\]
Hence, $(\Exp(X)\cdot p)_2=(\Exp(X)\cdot p)_4$ and $(\Exp(X)\cdot p)_1-(\Exp(X)\cdot p)_3=p_1-p_3$, where subscripts just refer to the corresponding component of the vector. Conversely, let $q\in\ads$ such that $q_2=q_4$ and $q_1-q_3=p_1-p_3$. Then, if we take $X\in\g{n}$ as in~\eqref{gen-X-in-n} such that $a=(p_2-q_2)/(p_1-p_3)\in\bR$ and $v=(q-p)/(p_1-p_3)\in\bR^{n-2}$, then $\Exp(X)\cdot p=q$. Indeed,
\begin{equation}\label{N-orbit}
N\cdot p= \ads \cap (\bV+p),
\text{ where
$\bV=\{ x \in \bR^{2,n} : x_1=x_3, x_2=x_4 \}$, if $p_2=p_4$.}
\end{equation}

Now, assume $p_2 \neq p_4$. By~\eqref{N-prin-Tan-Sp} we may assume $v=0$ in~\eqref{gen-X-in-n}, and thus
\[
\exp(X)=
\left(\begin{array}{@{}cc|cc|c}
1 &  a+b	& o & -a-b	& 0 \\
-a-b & 1-2ab+\frac{1}{2}\langle w,w\rangle	& a-b & 2ab+\frac{1}{2}\langle w,w\rangle & w\trsp\\
\hline
0 & a-b & 1	& -a+b	& 0 \\
-a-b & -2ab+\frac{1}{2}\langle w,w\rangle	& a-b & 1+2ab-\frac{1}{2}\langle w,w\rangle & w\trsp\\
\hline
0 & w & 0 & -w	& I_{n-2}
\end{array}\right).
\]
In this case we get $N\cdot p \subset \ads \cap (\bW+p)$ where $\bW=\{ x \in \bR^{2,n} : x_2=x_4 \}$. To see the reverse inclusion, let $q \in \ads \cap (\bW+p)$. Then, taking $X\in\g{n}$ as in~\eqref{gen-X-in-n} with
\begin{align*}
a&{}=\frac{(q_1+q_3)-(p_1+p_3)}{2(p_2-p_4)},& b&{}=\frac{(q_1-q_3)-(p_1-p_3)}{2(p_2-p_4)}, &
w=\frac{q-p}{p_2-p_4},
\end{align*}
we get $\Exp(X)\cdot p=q$. Therefore, we have shown

\begin{proposition}\label{N-action}
The nilpotent factor $N$ in the Iwasawa decomposition $SO^0(2,n)=KAN$ acts with cohomogeneity one on $\ads$. For every $p \in \ads$ with $p_2 \neq p_4$, the orbit $N\cdot p$ is principal and obtained as the intersection of $\ads$ with the affine hyperplane $p+\bW$, where $\bW=\{ x \in \bR^{2,n} : x_2=x_4 \}$. If $p_2 = p_4$, then $N\cdot p$ is a singular orbit of dimension $n-1$ that is obtained by intersecting $\ads$ with the affine subspace $p+\bV$, where $\bV=\{ x \in \bR^{2,n} : x_1=x_3, x_2=x_4 \}$.

In particular, principal orbits are parametrized by non-zero values of $p_2-p_4$ and singular orbits (where $p_2=p_4$) are parametrized by non-zero values of $p_1-p_3$.
\end{proposition}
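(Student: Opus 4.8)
The plan is to split the argument into three steps: (i) a tangent-space computation showing the action has cohomogeneity one and locating the lower-dimensional orbits; (ii) the inclusions $N\cdot p\subseteq\ads\cap(p+\bW)$ and, when $p_2=p_4$, $N\cdot p\subseteq\ads\cap(p+\bV)$, deduced from $N$-invariant linear functionals; and (iii) the reverse inclusions, obtained by explicitly inverting the equation $\Exp(X)\cdot p=q$, from which the parametrization of the two families of orbits then follows.

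First I would establish cohomogeneity one by a dimension count. Since $\g{n}$ is nilpotent and $N$ connected, $\Exp\colon\g{n}\to N$ is a diffeomorphism, so $N\cdot p=\{\Exp(X)\cdot p:X\in\g{n}\}$ and $T_p(N\cdot p)=\g{n}\cdot p$. Writing a generic $X\in\g{n}$ as in~\eqref{gen-X-in-n} and computing $Xp$ gives $\dim(\g{n}\cdot p)=n$ when $p_2\neq p_4$ (this is~\eqref{N-prin-Tan-Sp}) and $\dim(\g{n}\cdot p)=n-1$ when $p_2=p_4$ (this is~\eqref{N-sing-Tan-Sp}); as $\dim\ads=n+1$, the action has cohomogeneity one and the $(n-1)$-dimensional orbits occur exactly on the locus $\{p_2=p_4\}$. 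I would also record the elementary remark that, on $\ads$, the equation $-p_1^2-p_2^2+p_3^2+\dots+p_{n+2}^2=-1$ forces $p_1^2-p_3^2=1+\sum_{i\geq 5}p_i^2\geq 1$ whenever $p_2=p_4$; in particular $p_1\neq p_3$ on the singular locus, a fact needed below.

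Next, for the ``$\subseteq$'' inclusions I would use invariant functionals read off from~\eqref{gen-X-in-n}. Rows $2$ and $4$ of every $X\in\g{n}$ coincide, so $(Xx)_2=(Xx)_4$ for all $x$; hence $x\mapsto x_2-x_4$ is annihilated by $\g{n}$ and is constant along every $N$-orbit, giving $N\cdot p\subseteq\ads\cap(p+\bW)$ with $\bW=\{x\in\bR^{2,n}:x_2=x_4\}$. Comparing rows $1$ and $3$ of~\eqref{gen-X-in-n} gives $(Xx)_1-(Xx)_3=2b\,(x_2-x_4)$, so along the orbit of a point $p$ with $p_2=p_4$ (which remains inside $\{x_2=x_4\}$) the functional $x\mapsto x_1-x_3$ is also constant, whence $N\cdot p\subseteq\ads\cap(p+\bV)$ with $\bV=\{x\in\bR^{2,n}:x_1=x_3,\ x_2=x_4\}$ in that case.

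Finally, for the reverse inclusions I would invert $\Exp(X)\cdot p=q$ explicitly, using the matrix exponentials displayed above (which have the form $I+X+\tfrac12X^2$, the cubic term vanishing since $\g{n}$ is $3$-step nilpotent). Given $q\in\ads\cap(p+\bW)$ with $p_2\neq p_4$, the matrix $X\in\g{n}$ of~\eqref{gen-X-in-n} with $a=\tfrac{(q_1+q_3)-(p_1+p_3)}{2(p_2-p_4)}$, $b=\tfrac{(q_1-q_3)-(p_1-p_3)}{2(p_2-p_4)}$, $w=\tfrac{q-p}{p_2-p_4}$ satisfies $\Exp(X)\cdot p=q$; and given $q\in\ads\cap(p+\bV)$ with $p_2=p_4$ (so $p_1\neq p_3$), the matrix with $b=0$, $w=0$, $a=\tfrac{p_2-q_2}{p_1-p_3}$, $v=\tfrac{q-p}{p_1-p_3}$ does. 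This gives $N\cdot p=\ads\cap(p+\bW)$ and $N\cdot p=\ads\cap(p+\bV)$ respectively. Since $x_2-x_4$ is $N$-invariant and pins down the slice $p+\bW$, two points with $p_2\neq p_4$ lie on the same orbit iff they share the value of $p_2-p_4$, and every nonzero real occurs on $\ads$; restricted to the singular locus, $x_1-x_3$ is $N$-invariant, pins down $p+\bV$, and is never zero by the remark above, yielding the stated parametrization. The only genuine work is verifying that these explicit $X$'s send $p$ to $q$; the obstacle there is the bookkeeping forced by the quadratic term $\tfrac12X^2$, but the linear system for the parameters becomes triangular once one is allowed to divide by $p_2-p_4$ (resp.\ $p_1-p_3$), which is exactly the hypothesis in each case.
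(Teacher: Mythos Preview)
Your proposal is correct and follows essentially the same route as the paper: the same tangent-space dimension count \eqref{N-sing-Tan-Sp}--\eqref{N-prin-Tan-Sp}, the same restriction to the sub\-families $\{b=0,\,w=0\}$ and $\{v=0\}$ of~\eqref{gen-X-in-n}, and the identical explicit inversion formulas for $a,b,v,w$. The only cosmetic difference is that you phrase the ``$\subseteq$'' inclusions via $N$-invariant linear functionals rather than reading them off from the displayed exponentials; one small caution is that your aside ``the cubic term vanishes since $\g{n}$ is $3$-step nilpotent'' conflates Lie-algebra nilpotency with matrix nilpotency---the vanishing of $X^3$ here is a direct computation for the chosen $X$, not a formal consequence of the step length.
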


\begin{remark}\label{rm:contain-H}
Let $H$ be a subgroup of $SO^0(2,n)$ such that $N\subset H\subset AN$. We determine when $H$ acts on $\ads$ with cohomogeneity one.

By Proposition~\ref{N-action}, the nilpotent subgroup $N$ acts with cohomogeneity one on $\ads$. Hence, $H$ acts with cohomogeneity one or zero. Thus, it is enough to rule out those subgroups that act with cohomogeneity zero. Consider the Lie algebra $\g{h}$ of $H$, which by assumption satisfies $\g{n}\subset\g{h} \subset \g{a}\oplus \g{n}$. Let $p\in\ads$, and $a$, $b\in\bR$. Then, if $p_2\neq p_4$, by~\eqref{N-prin-Tan-Sp} and Figure~\ref{fig:root-spaces}, we have $H_{a,b}\cdot p\in\g{n}\cdot p$ if and only if $b=0$. Thus, either $\g{h}=\g{n}$ or $\g{h}=\bR H_{1,0}\oplus\g{n}$. In particular, $AN$ acts with cohomogeneity zero on $\ads$.

The orbits of $N$ have already been studied in Proposition~\ref{N-action}. Let $\g{h}=\bR H_{1,0}\oplus\g{n}$. We are now going to describe the orbit foliation induced by $H$. For that matter, let us denote by $\bar{O}_r$ the principal $N$-orbit through a point $p$ with $r=p_2 - p_4\neq 0$, and by $O_s$ the singular $N$-orbit of a point $p$ with $p_2=p_4$ and $s=p_1-p_3$.
Then it is not difficult to see that the orbit foliation induced by $H$ can be written as
\begin{equation}\label{eq:H-orbits}
\FM_H = O_+ \cup O_-  \cup \biggl(\bigcup_{r\neq 0 \in \bR} \bar{O}_r\biggr),
\text{ where $O_+ = \bigcup_{s >0}O_s$, and $O_- = \bigcup_{s <0}O_s$.}
\end{equation}
\end{remark}


\subsection{Parabolic subgroups of $SO^0(2,n)$}
\label{subsec: Parab-Subg-SO(2,n)}\hfill

From the point of view of Riemannian geometry, a parabolic subgroup of the isometry group of a Riemannian symmetric space of noncompact type is the normalizer of a point at infinity. This definition does not apply to our case, but parabolic subgroups of real semisimple Lie groups can be determined, up to conjugacy, from the root space decomposition of its Lie algebra. We take this approach in this section, and determine how parabolic subgroups of $SO^0(2,n)$ act on $\ads$.

We follow~\cite{BoJi06}. The conjugacy classes of parabolic Lie subgroups are parametrized by proper subsets of the set of positive simple roots $\Lambda$. Let $\Phi\varsubsetneq\Lambda$ be a subset of the set of positive simple roots, and denote by $\Sigma_\Phi$ the root system generated by $\Phi$. We write $\Sigma^+_\Phi = \Sigma_\Phi \cap \Sigma^+$, and define
\begin{align}\label{eq:parabolic}
\g{l}_\Phi &{}=\g{g}_0 \oplus \biggl( \bigoplus_{\lambda \in \Sigma_\Phi}\g{g}_\lambda \biggr),&
\g{n}_\Phi &{}=\bigoplus_{\lambda \in \Sigma^+ \backslash \Sigma^+_\Phi}\g{g}_\lambda&
\g{q}_\Phi= \g{l}_\Phi \oplus \g{n}_\Phi.
\end{align}
By definition, $\g{q}_\Phi$ is a parabolic subalgebra associated with the subset $\Phi \subset \Lambda$. The decomposition $\g{q}_\Phi= \g{l}_\Phi \oplus \g{n}_\Phi$ is called the Langlands decomposition of $\g{q}_\Phi$. Moreover, $\g{l}_\Phi$ is a reductive Lie algebra, and $\g{n}_\Phi$ is a nilpotent subalgebra of $\g{g}$. For $\Phi=\emptyset$ we have $\g{l}_\emptyset=\g{g}_0$ and $\g{n}_\emptyset=\g{n}$. Hence,~\eqref{eq:parabolic} becomes
\[
\g{q}_\emptyset=\g{g}_0 \oplus \g{n}=\g{k}_0\oplus\g{a}\oplus\g{n},
\]
which is a minimal parabolic subalgebra.

For $\g{so}(2,n)$ there are two more parabolic subalgebras up to conjugacy. We denote them by $\g{q}_i=\g{q}_{\Phi_i}$, $i=1,2$, where $\Phi_i=\Lambda\setminus\{\alpha_i\}$. Simply using~\eqref{eq:parabolic} we get
\begin{align*}
\g{q}_1&{}
=\g{g}_0\oplus(\g{g}_{\alpha_2}\oplus\g{g}_{-\alpha_2})
\oplus(\g{g}_{\alpha_1}\oplus\g{g}_{\alpha_1+\alpha_2}
\oplus\g{g}_{\alpha_1+2\alpha_2})
=\g{q}_\emptyset\oplus\g{g}_{-\alpha_2},\\
\g{q}_2&{}=
\g{g}_0\oplus(\g{g}_{\alpha_1}\oplus\g{g}_{-\alpha_1})
\oplus(\g{g}_{\alpha_2}\oplus\g{g}_{\alpha_1+\alpha_2}
\oplus\g{g}_{\alpha_1+2\alpha_2})
=\g{q}_\emptyset\oplus\g{g}_{-\alpha_1}.
\end{align*}
We denote by $Q_\emptyset$, $Q_1$ and $Q_2$ the connected Lie subgroups of $SO^0(2,n)$ whose Lie algebras are $\g{q}_\emptyset$, $\g{q}_1$ and $\g{q}_2$, respectively.

A parabolic subgroup of the isometry group of a Riemannian symmetric space of noncompact type acts transitively on that space. However, in our setting we have

\begin{proposition}\label{th:parab-action}
The groups $Q_\emptyset$, $Q_1$ and $Q_2$ act isometrically on $\ads$ with cohomogeneity zero. Furthermore, the group $Q_2$ acts transitively on $\ads$. The orbit foliation induced by $Q_\emptyset$, $Q_1$ and $AN$ coincide, and is given by
\begin{equation} \label{qe=q2-orbdec}
\FM=O_+ \cup O_- \cup \bar{O}_+ \cup \bar{O}_-,
\end{equation}
where $O_+$ and $O_-$ are as in~\eqref{eq:H-orbits}, and
$\bar{O}_+ = \bigcup_{r >0}\bar{O}_r$, $\bar{O}_- = \bigcup_{r <0}\bar{O}_r$.
\end{proposition}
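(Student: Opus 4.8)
The plan is to analyze each of the three parabolic subgroups separately, building on what we already know about $N$ and $AN$ from Proposition~\ref{N-action} and Remark~\ref{rm:contain-H}. First I would treat $Q_\emptyset$. Since $\g{q}_\emptyset=\g{k}_0\oplus\g{a}\oplus\g{n}$ and $\g{k}_0\cong\g{so}(n-2)$, the group $Q_\emptyset$ contains $AN$, so by Remark~\ref{rm:contain-H} it acts with cohomogeneity zero (the cohomogeneity cannot drop below zero, and $AN$ already achieves it). To pin down the orbit foliation I would show that adding $K_0$ to $AN$ does not merge any of the $AN$-orbits. The key computation is that $\g{k}_0$ acts trivially on the relevant coordinates: an element of $\g{k}_0$ is a block $\diag(0,0,C)$ with $C\in\g{so}(n-2)$ acting on the last $n-2$ coordinates $(e_5,\dots,e_{n+2})$, so it fixes the functions $p_1,p_2,p_3,p_4$ and hence the defining linear conditions of $\bW$ and $\bV$, as well as the sign of $p_2-p_4$ and of $p_1-p_3$. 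Therefore each $AN$-orbit $\bar O_r$ and each subset $O_\pm$ is preserved, giving $\FM_{Q_\emptyset}=\FM_{AN}$, which is~\eqref{qe=q2-orbdec} once one notes $\bigcup_{r\neq 0}\bar O_r=\bar O_+\cup\bar O_-$.

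Next I would handle $Q_1=Q_\emptyset\oplus\g{g}_{-\alpha_2}$. Again $Q_1\supset AN$, so the cohomogeneity is zero. The extra generator is the negative root vector $\g{g}_{-\alpha_2}=\theta\g{g}_{\alpha_2}$; using the explicit form of $\g{g}_{\alpha_2}$ from Figure~\ref{fig:root-spaces} one writes down $\g{g}_{-\alpha_2}$ and checks, as above, that its action preserves the affine subspaces $p+\bW$ and $p+\bV$ appearing in Proposition~\ref{N-action} and does not change the sign of the parameters $p_2-p_4$ or $p_1-p_3$. The cleanest way is probably to observe that $\g{q}_1$ is contained in the subalgebra preserving the flag/decomposition that cuts out these subspaces, but one can also just verify directly that $\g{g}_{-\alpha_2}\cdot p$ lies in the span of $\g{n}\cdot p$ together with the tangent directions already present — i.e. that $Q_1$ and $Q_\emptyset$ have the same orbits through every point. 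This yields $\FM_{Q_1}=\FM_{Q_\emptyset}=\FM_{AN}$.

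The substantive part is $Q_2=Q_\emptyset\oplus\g{g}_{-\alpha_1}$, which I claim acts transitively. Here the new generator is the long negative root vector $\g{g}_{-\alpha_1}=\theta\g{g}_{\alpha_1}$; crucially $\g{g}_{\alpha_1}\oplus\g{a}\oplus\g{g}_{-\alpha_1}$ contains an $\g{sl}(2,\bR)$-triple whose action mixes the first four coordinates and, in particular, can change $p_2-p_4$ through zero. The strategy I would use: take any $p\in\ads$; by the $Q_\emptyset$-action (which already gives the foliation $O_+\cup O_-\cup\bar O_+\cup\bar O_-$) it suffices to connect the three strata. Concretely I would compute $\g{q}_2\cdot p$ at a representative point $p$ with $p_2=p_4$ (a point on a singular $N$-orbit) and show that the extra direction coming from $\g{g}_{-\alpha_1}$ together with $\g{g}_0\oplus\g{n}$ now spans the full tangent space $T_p\ads$, so that there is no singular orbit for $Q_2$; since $\ads$ is connected and $Q_2$-orbits are then all open, there is a single orbit. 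Alternatively, and perhaps more transparently, I would exhibit an explicit element of $Q_2$ not in $AN$ whose action moves a point with $p_2=p_4$ to one with $p_2\neq p_4$ and another moving between $\bar O_+$ and $\bar O_-$; combined with transitivity of $AN$ on each $\bar O_r$-type set modulo $K_0$ this forces one orbit.

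The main obstacle I anticipate is the transitivity of $Q_2$: one must be careful that "all orbits open" plus connectedness really does the job (it does, since the orbits partition $\ads$ into disjoint open sets), and the tangent-space computation at a point with $p_2=p_4$ is the one genuinely new calculation — one has to verify that the single extra root vector $\g{g}_{-\alpha_1}$ supplies exactly the missing dimension, i.e. that $\dim(\g{q}_2\cdot p)=n$ everywhere, not just generically. The other two cases ($Q_\emptyset$ and $Q_1$) reduce to the bookkeeping already done in Remark~\ref{rm:contain-H}, checking that the added generators act trivially on the coordinates $p_1,\dots,p_4$ that label the orbits; this is routine once the explicit matrix forms from Figure~\ref{fig:root-spaces} are in hand.
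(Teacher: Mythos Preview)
Your proposal is correct and follows essentially the same route as the paper: both argue that $AN\subset Q_i$ forces cohomogeneity zero, then compare tangent spaces at the special points $p_2=p_4$, showing that $\g{k}_0\cdot p$ and $\g{g}_{-\alpha_2}\cdot p$ lie in $(\g{a}\oplus\g{n})\cdot p$ (so $Q_\emptyset$ and $Q_1$ have the same orbits as $AN$) while $\g{g}_{-\alpha_1}\cdot p$ supplies the missing direction (so every $Q_2$-orbit is open, hence $Q_2$ is transitive by connectedness). One small slip: the target dimension for $\g{q}_2\cdot p$ is $n+1$, not $n$.
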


\begin{proof}
Since $\g{q}_\emptyset$, $\g{q}_1$ and $\g{q}_2$ contain $\g{a}\oplus\g{n}$, it is clear that $Q_\emptyset$, $Q_1$ and $Q_2$ act with cohomogeneity zero by Remark~\ref{rm:contain-H}.

Let $p\in\ads$. If $p_2\neq p_4$, since $\g{a}\oplus\g{n}$ is contained in $\g{q}_i$,  $i\in\{\emptyset,1,2\}$, it readily follows from Remark~\ref{rm:contain-H} that $\g{q}_i\cdot p$ is $(n+1)$-dimensional. Assume now $p_2=p_4$. We can easily see, using the same methods as in Section~\ref{sec:N}, that $(\g{a}\oplus\g{n})\cdot p=\bV$, where $\bV=\{x\in\bR^{2,n}:\langle x,p\rangle=0, x_2=x_4\}$ (the equality $\langle x,p\rangle=0$ follows simply from the fact that the action is isometric).

If $X\in\g{g}_{-\alpha_1}=\theta\g{g}_{\alpha_1}$, then it follows from the expressions in Figure~\ref{fig:root-spaces} that $(X\cdot p)_2=-(X\cdot p)_4$. Thus, $T_p(Q_2\cdot p)=\g{q}_2\cdot p=T_p\ads$ and we get that the action of $Q_2$ on $\ads$ is transitive. On the other hand, $\g{g}_{-\alpha_2}\cdot p$, $\g{k}_0\cdot p\subset(\g{a}\oplus\g{n})\cdot p$, and hence $\g{q}_1\cdot p=\g{q}_\emptyset\cdot p=(\g{a}\oplus\g{n})\cdot p\varsubsetneq T_p\ads$. As in Remark~\ref{rm:contain-H}, the corresponding orbits through $p\in\ads$ with $p_2=p_4$ are parametrized by $s=p_1-p_3\neq 0$.
\end{proof}




\bibliographystyle{plain}

\begin{thebibliography}{20}
\bibitem{AdSt97}
S.~Adams, G.~Stuck: The isometry group of a compact Lorentz manifold.\ I, II, \textit{Invent.\ Math.} \textbf{129} (1997), no.~2, 239--261, 263--287.

\bibitem{AA92}
A.~V.~Alekseevsky, D.~V.~Alekseevsky: {$G$-manifolds with one
  dimensional orbit space}, \textit{Adv. Soviet Math.}, \textbf{8} (1992), no.~1, 1--31.

\bibitem{AhKa09}
P.~Ahmadi, S.~M.~B.~Kashani: Cohomogeneity one anti de Sitter space $H^3_1$, \textit{Bull.\ Iranian Math.\ Soc.} \textbf{35} (2009), no.~1, 221--233, 289.

\bibitem{Be82}
L.~B\'{e}rard-Bergery: {Sur de nouvelles vari\'{e}t\'{e}s riemanniennes d'Einstein}, \textit{Ins. \'{E}lie
Cartan} \textbf{6} (1982), 1--60.

\bibitem{BeBr01}
J.~Berndt, M.~Br\"{u}ck: Cohomogeneity one actions on hyperbolic spaces, \textit{J.\ Reine Angew.\ Math.} \textbf{541} (2001), 209--235.

\bibitem{BeDiVa}
J.~Berndt, J.~C.~D\'{\i}az-Ramos, M.~Vanaei: Cohomogeneity one actions on Minkowski spaces, to appear in \textit{Monatsh. Math.}

\bibitem{BeTa07}
J~ Berndt, H. Tamaru: {Cohomogeneity one actions on noncompact symmetric spaces of rank one}, \textit{Trans. Amer. Math. Soc.} \textbf{359} (2007), no.~71, 3425--3438.

\bibitem{BeTa13}
J.~Berndt, H.~Tamaru: Cohomogeneity one actions on symmetric spaces of noncompact type, \textit{J.\ Reine Angew.\ Math.} \textbf{683} (2013), 129--159.

\bibitem{BoJi06}
A.~Borel, L.~Ji: \textit{Compactifications of symmetric and locally symmetric spaces}, Birkh\"{a}user, Boston, 2006.

\bibitem{Br72}
G.~E. Bredon: \textit{Introduction to Compact Transformation Groups}, {Academic Press}, 1972.

\bibitem{CaGaSw09}
M.~Castrill\'{o}n-L\'{o}pez, P.~M.~Gadea, A.~F.~Swann: {Homogeneous structures on real and complex hyperbolic spaces}, \textit{Illinois J. Math.} \textbf{53} (2009), no.~2, 365--722.

\bibitem{DaWa11}
A.~S.~Dancer, M.~Y.~Wang: On Ricci solitons of cohomogeneity one, \textit{Ann.\ Global Anal.\ Geom.} \textbf{39} (2011), no.~3, 259--292.

\bibitem{DiDoKo13}
J.~C.~D\'{\i}az-Ramos, M.~Dom\'{\i}nguez-V\'{a}zquez, A.~Kollross: {Polar actions on complex hyperbolic spaces}, arXiv:1208.2823 [math.DG].

\bibitem{DiDoSa}
J.~C.~D\'{\i}az-Ramos, M.~Dom\'{\i}nguez-V\'{a}zquez, V.~Sanmart\'{\i}n-L\'{o}pez: Isoparametric hypersurfaces in complex hyperbolic spaces, arXiv:1509.02498 [math.DG].

\bibitem{DiDoVi}
J.~C.~D\'{\i}az-Ramos, M.~Dom\'{\i}nguez-V\'{a}zquez, C.~Vidal-Casti\~{n}eira: Real hypersurfaces with two principal curvatures in complex projective and hyperbolic planes, to appear in \emph{J. Geom. Anal.}

\bibitem{GoGu00}
C.~Gorodski, N.~Gusevskii: Complete minimal hypersurfaces in complex hyperbolic space, \textit{Manuscripta Math.} \textbf{103} (2000), no.~2, 221--240.

\bibitem{GrZi00}
K.~Grove, W.~Ziller: Curvature and symmetry of Milnor spheres, \textit{Ann.\ of Math.\ (2)} \textbf{152} (2000), no.~1, 331--367.

\bibitem{Ko02}
A.~Kollross: A classification of hyperpolar and cohomogeneity one actions, \textit{Trans.\ Amer.\ Math.\ Soc.} \textbf{354} (2002), no.~2, 571--612.

\bibitem{Se38}
B.~Segre: Famiglie di ipersuperfie isoparametriche negli spazi euclidei ad un qualunque numero di dimensioni, \textit{Atti Accad.\ Naz.\ Lincei Cl.\ Sci.\ Fis.\ Mat. Natur.\ Rend.\ Lincei (6) Mat.\ Appl.}\ \textbf{27} (1938), 203--207.

\bibitem{VKS16}
M.~J.~Vanaei, S.~M.~B.~Kashani, E.~Straume: Cohomogeneity one anti de Sitter space $AdS^{n+1}$, \textit{Lobachevskii J. Math.} \textbf{37} (2016), no.~2, 204--213.

\bibitem{Wi06}
B.~Wilking: Positively curved manifolds with symmetry, \textit{Ann.\ of Math.\ (2)} \textbf{163} (2006), no.\ 2, 607--668.
\end{thebibliography}

\end{document}